\newcommand  \ind[1]  {   {1\hspace{-1.2mm}{\rm I}}_{\{#1\} }    }
\newcommand{\R}{\mathbb{R}}
\newcommand {\al} {\alpha}
\newcommand {\eps}  {\varepsilon}
\newcommand {\dv}  { {\rm div} }
\newcommand {\caln} { {\mathcal N} }
\spnewtheorem*{AAT}{Abel's Addition Theorem}{\bfseries}{\itshape}
\patchcmd{\qed}{\ifmmode\qedsymbol}{\ifmmode\the\qedsymbol}{}{\foobar}
\patchcmd{\qed}{\hfil\qedsymbol}{\hfil\the\qedsymbol}{}{\foobar}
\newcommand{\qedhere}{\tag*{\the\qedsymbol}}
\begin{document}

%
%
\title*{Incompressible limit of porous media equation with chemotaxis and growth}

%
%
\author{Qingyou He \and Hai-Liang  Li  \and  Beno\^\i t Perthame}
\institute{Q.~He \at Sorbonne Universit{\'e}, CNRS, Laboratoire de Biologie Computationnelle et Quantitative, F-75005 Paris, \email{qyhe.cnu.math@qq.com } \and %
H.~Li \at School of Mathematical Sciences,
	Capital Normal University, Beijing 100048, P.R. China, 
\email{hailiang.li.math@gmail.com}
\and
  B.~Perthame \at Sorbonne Universit{\'e}, CNRS, Universit\'{e} de Paris, Inria, Laboratoire Jacques-Louis Lions,  F-75005 Paris, \email{benoit.perthame@sorbonne-universite.fr}}
%
%
\maketitle

\abstract{We revisit the problem of proving the incompressible limit for the compressible porous media equation with Newtonian drift and growth. The question is motivated by models of living tissues development including chemotaxis. We extend the problem, already treated by the authors and several other contributions, in using a simplified approach, in treating dimensions two or higher, and in incorporating the pressure driven growth term. We also complete the analysis with stronger $L^4$  estimates on the pressure gradient. The major difficulty is to prove the strong convergence of the pressure gradient which is obtained here by a new observation on an algebraic relation involving the pressure gradient for weak limits. 
}

\section{Compressible porous media with chemotaxis and growth}
\label{sec:CPM}

\noindent{\bf Setting the problem.} 
Several recent mechanical approaches propose to describe the development of living tissues by flows through a porous media formed by the extra-cellular matrix \cite{MR2471306, MR4233467, MR4327909}. Including chemotaxis, this leads to write the compressible porous media equation combined with the Keller-Segel model
\begin{equation} \label{pmc}
\begin{cases}
\partial_t \rho_{m} - {\rm div} \big[\rho_{m} [\nabla p_m - \nabla \phi_m ]\big]= \rho_{m} G(p_m), \qquad x \in \R^n, \; t \geq 0, 
\\
p_m= (\rho_{m})^m, \qquad \phi_m = \caln \star \rho_{m},  
\end{cases}
\end{equation}
where $\rho_m(x,t)$ denotes the cell  number density, $p_m$ is the pressure which here is taken as an  homogeneous law for simplicity (see~ \cite{DHV20, BenAmar_C_F} for variants and comments on this issue), $G(\cdot)$ is the growth/death rate of cells. The chemotactic term is based on the Newtonian (attractive) potential
\begin{align*} \begin{cases}
\mathcal{N}(x)=-\alpha_n \frac  1 {|x|^{n-2}}, \quad \nabla \mathcal{N}(x)=(n-2) \alpha_n  \frac  x {|x|^{n}} , \quad  - \Delta \mathcal{N} = \delta  \quad  (n\geq 3),
\\
 \mathcal{N}(x)= \; \alpha_2  \ln( |x|), \quad \, \nabla \mathcal{N}(x)= \alpha_2  \frac  x {|x|^{2}} , \qquad  \quad \; \; \; - \Delta \mathcal{N} = \delta  \quad (n=2).
\end{cases}
\end{align*}
However, the interested reader can check that the drift term can be much more general in the class
\begin{equation*}
\nabla \phi (x) = \nabla \phi_0(x) + \int_{\R^n} K(x,y)  \rho (y) dy , 
\end{equation*}
as long as it satisfies at least a control (with compactness), for some $1\leq q < \infty$, 
\begin{equation*}
\| \nabla \phi \|_{L^2(\R^n)} \leq C [1+ \| \rho \|_{L^1(\R^n)} + \| \rho \|_{L^q (\R^n)} ], \quad  m\geq q,
\end{equation*}
see Remark \ref{rk:ape}. All the equations in the present paper are understood in the weak sense as in~\cite{he2022incompressible}.
\\

We consider the incompressible limit, that is the limit $m \to \infty$. Departing from the following version of Eq.~\eqref{pmc},
\begin{equation} \label{pmc2}
\partial_t \rho_{m} - \Delta \frac{m}{m+1}( \rho_m)^\frac{m+1}{m} +  {\rm div} [\rho_{m} \nabla \phi_m ] = \rho_{m} G(p_m), 
\end{equation}
we may formally pass to the limits $\rho_\infty$ of $\rho_m$ , $p_\infty$ of $(p_m)^\frac{m+1}{m}$. We obtain that these limits  satisfy  the equation
\begin{equation} \label{pmi}
\begin{cases}
\partial_t \rho_{\infty} -  \Delta  p_\infty + {\rm div} [\rho_\infty \nabla \phi_\infty] = \rho_{\infty} G(p_\infty), \qquad x \in \R^n, \; t \geq 0, 
\\[5pt]
p_\infty (1- \rho_{\infty})=0, \qquad \rho_\infty \leq 1 , \qquad \phi_\infty = \caln \star \rho_{\infty},  
\end{cases}
\end{equation}
and we show later that the limiting equation can also be written equivalently
\begin{align}  \label{pmi2}
\partial_t \rho_{\infty} -  {\rm div} [\rho_\infty \nabla   p_\infty] + {\rm div} [\rho_\infty \nabla \phi_\infty] = \rho_{\infty} G(p_\infty).
\end{align} 

One can also establish the  so-called {\em complementarity relation}, for almost all $t>0$
\begin{equation} \label{cc}
p_\infty \, \Delta [p_{\infty} - \phi_\infty + G(p_{\infty})]=0,  \qquad x \in \R^n.
\end{equation}
System \eqref{pmi} with \eqref{cc}  is a weak and global form of the geometric Hele-Shaw free boundary problem (see \cite{MPQ_17, KMW_23}).
\\

As always, the equation for the pressure $p_m$ is playing a central role in the analysis; it is written
\begin{equation} \label{pressure}
\partial_t p_{m} - |\nabla p_m |^2 +\nabla p_m. \nabla \phi_m = m p_m [\Delta p_m -\Delta \phi_m + G(p_m)].
\end{equation}
The dominant term on the right hand side of this equation explains formally the complementarity relation~\eqref{cc}, the difficulty being to pass to the limit in the quadratic term $p_m \Delta p_m$ which amounts to prove the strong convergence of $\nabla p_m$. 
\\

\noindent{\bf What is new.}  We complement our previous study~\cite{he2022incompressible} by taking into account the source term on the right hand side of Eq.~\eqref{pmc}, treating all dimensions $n\geq 2$ (rather than $n\geq 3$), allowing more general kernels to define $\phi_m$ and reducing the assumptions on the initial data (in terms of regularity, because we do not use the Aronson-B\'enilan estimate here) and removing the compact support assumption.
\\

The purpose of the present paper is also to give a simple proof of the limit $m \to \infty$ in Eq.~\eqref{pmc} and recover the limit formulations~\eqref{pmi} as well as~\eqref{cc}. The difficulty is to prove strong convergence of the gradient $\nabla p_m$ and we show how to use the recent method proposed in \cite{PrX, LiuXu} and extended in \cite{DouLiuZhou, david:hal-03636939}. We base our analysis of the observation in~\cite{david:hal-03636939} that compensated compactness arguments permit to identify certain limits. This however is not enough and we develop a new idea, namely that the relation $\rho_\infty \nabla p_\infty=\nabla p_\infty$ can be proved a priori.
\\

Let us stress  that we prove the strong convergence of $\nabla p_m$ and not only $\nabla (p_m)^{\frac{m+1}{m}}$ as in the previous works using this method.
\\

\noindent{\bf Related studies.} 
This problem has attracted a lot of attention recently and various methods have been used. The problem of the incompressible limit of the compressible porous media equations with a growth/death term controled by pressure was introduced in~\cite{PQV_14} using strong regularity assumptions and the Aronson-B\'enilan estimate. This strategy has been extended succesfully to various situations as a singular pressure law $p=\frac{\eps \rho}{1-\rho}$ in~\cite{HechtVauchelet2017, DHV20}, chemotaxis and a given drift~\cite{DavidS_21}, coupling with an equation for nutrients~\cite {DnPb21}, cell active motion (including a diffusion) in~\cite{PQTV_14}, 
\\

Another route to study the problem is through viscosity solutions and this was performed in~\cite{KimPS,KimPozar_18}. The case of an aditional drift term is treated in~\cite{kim2019singular, KimZ_21}. Still another approach is based on the obstacle problem, \cite{GKM}, a method which has been extended to bi-stable terms on the right hand side in~\cite{KimMellet23}. Recently a Lagrangian formulation is given in~\cite{jacobs2023lagrangian}.
\\

Because the geometric form of the  Hele-Shaw problem uses the set $\Omega(t)=\{ \rho_\infty(t,\cdot) =1\}$, several authors have studied the question to know, when the initial data is the indicator function of $\Omega(0)$, if this property is propagated even if $\Omega(t)$ has little regularity. For this question it is convenient to work on time integrated variables, which leads to the obstacle problem,  we refer to ~\cite{MPQ_17,CKY_2018,Jacobs_Kim_Tong23,KMW_23}. The regularity and  stability of such patches is studied in~\cite{KimLelmi23,CollinsJacobsKim}.
\\

In the conservative case, that is $G\equiv 0$, the Hele-Shaw problem has been handled under the name of {\em congested flows} by \cite{MRS10,MRSV11,MRS14}, using  a new method based on Wasserstein distance and optimal transport. See also for systems \cite{MRS14, Laborde}. From this point of view, the pressure $p_\infty$ is interpreted as a Lagrange multiplier associated with the constraint $\rho_\infty \leq 1$.The derivation departing from the compressible porous media equation has been achieved in~\cite{AKY_14,CKY_2018}. These papers combine the gradient flow approach with methods based on viscosity solutions which also allow to define another notion of solutions. 
\\

Eq. \eqref{pmc} assumes that cells are pushed passively, when including active movement, one arrives at the equation 
\begin{align*} 
\partial_t \rho_{m} -  \mu \Delta \rho_m -{\rm div} \big[\rho_{m} [\nabla p_m - \nabla \phi_m ]\big]= \rho_{m} G(p_m).
\end{align*} 
This problem is studied in \cite{PQTV_14} and in the conservative case~\cite{KMW_23}. There is still a set $\Omega(t)$ where $p_\infty >0$, but $\rho_\infty $ is smooth and has a positive tail because the equation is parabolic non-degenerate. 
\\

Error estimates for $p_m - p_\infty$ have also been established in~\cite{AKY_14, CKY_2018} for the conservative case with Newtonian potential in Wasserstein distance.  With a growth term (and no drift) error estimates in $H^{-1}$ are proved in~\cite{DDP22}.
\\

Related to the limit $m \to \infty$, we also mention that the growth term allows for traveling waves. The existence and Hele-Shaw limit are established in~\cite{dalibard:hal-03300624}, see also the references therein and, for the problem with a necrotic core \cite{DouZhouNecrotic2023}. Traveling waves in the case with nutrients are also built analytically in~\cite{PTV14}. Also Darcy's law for the prorous media can be generalized to other rules as Brinkman's law, see~\cite{DPS21,DDMS23,KimTuranova_18}, or as full Navier-Stokes system~\cite{Vauchelet_Z}.
\\

\noindent{\bf Outline of the paper.} In the next section, we present the basic estimates necessary to analyze the problem. With these estimates we can establish, in Section~\ref{sec:limit}, several relations between weak limits of $\rho_m$ and $p_m$. Those are used in Section~\ref{sec:limites} to prove that $\nabla p_m$ converges strongly in $L^2$ and this establishes the complementarity condition. In Section~\ref{sec:regularity}, we complete our analysis with some additional bounds on $p_m$ in particular $\nabla p_m \in L^4$.

\section{Estimates}
\label{sec:main}

We complete system \eqref{pmc} with an initial data  $\rho^0_m \geq 0$  and set $p^0_m= (\rho^0_m)^m$. We assume that for some constant $K^0$ independent of $m$,
\begin{equation}  \label{as:ID}
\int_{\R^n}  \rho^0_m  \big[1 +|x|^2\big] dx  \leq K^0, \qquad  \int_{\R^n} p^0_m [1+ (\rho^0_m)^4]  dx \leq K^0 m.
\end{equation}

For the right hand side of Eq.~\eqref{pmc}, we assume there is an 'homeostatic pressure' $p_H >0$  and a constant $G_M$ such that
\begin{equation}  \label{as:G}
G\in C^2([0, \infty); \R), \qquad G(p) \leq 0 \quad \text{for} \quad p\geq p_H, \qquad | G(\cdot) | \leq G_M.
\end{equation}

We recall standard estimates.
\begin{proposition}[A priori estimates] \label{thm:ape} 
With the assumptions \eqref{as:ID}, \eqref{as:G}, the solution of Eq.~\eqref{pmc} satisfies the following bounds for constant $C(T)$ independent of $m$,  and for $t\leq T$, 
\\[5pt]
(i) $\ \;   \;  \int_{\R^n} \rho_m (t) dx \leq M(t):=  e^{tG(0)} \int_{\R^n}  \rho^0_m dx$,
\\[5pt]
(ii) $ \; \int_0^T\int_{\R^n} |\nabla p_m (t)|^2 dx dt \leq C(T)$, \qquad $\int_{\R^n} p_m(t) dx \leq C(T)m$, 
\\[5pt]
(iii) $ \;  \int_0^T\int_{\R^n} \big[ |\nabla p_m^\frac{m+1}{m} |^{2}+ |\nabla p_m^\frac{m+2}{m} |^{2} \big] dx dt \leq C(T)$,  \quad $\int_0^T \int_{\R^n}  (\rho_m)^{m+5}  dx dt \leq C(T)$,
\\[5pt]
(iv) $ \;  \int_{\R^n} | x|^2 \rho_m (t) dx   \leq C(T)$,
\end{proposition}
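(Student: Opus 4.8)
\noindent\textit{Proof sketch.} The plan is to obtain all four bounds as \emph{a priori} estimates, by testing the weak form of Eq.~\eqref{pmc} against a well-chosen function of $\rho_m$ and closing a Grönwall inequality; the whole difficulty is to keep every constant independent of $m$. I treat the items in the order (i), (ii), (iii), (iv), each using the previous ones. For (i), integrate Eq.~\eqref{pmc} in $x$: the divergence term vanishes and $\frac{d}{dt}\int_{\R^n}\rho_m=\int_{\R^n}\rho_m G(p_m)\le G(0)\int_{\R^n}\rho_m$ (using $G(p_m)\le G(0)$, or simply $|G|\le G_M$). Grönwall and the mass bound in~\eqref{as:ID} then give the $m$-independent $M(t)$.

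For (ii), the right object is the energy $\mathcal E_m(t)=\frac1{m-1}\int_{\R^n}p_m\,dx$. Testing~\eqref{pmc} against $\frac{m}{m-1}\rho_m^{m-1}$ and integrating by parts, with $m\rho_m^{m-1}\nabla\rho_m=\nabla p_m$, yields
\[
\frac{d}{dt}\,\frac1{m-1}\int_{\R^n}p_m\,dx+\int_{\R^n}|\nabla p_m|^2\,dx=\int_{\R^n}\nabla p_m\cdot\nabla\phi_m\,dx+\frac{m}{m-1}\int_{\R^n}p_m\,G(p_m)\,dx.
\]
The drift term is absorbed by Young's inequality, $\int\nabla p_m\cdot\nabla\phi_m\le\frac12\int|\nabla p_m|^2+\frac12\int|\nabla\phi_m|^2$, where $\|\nabla\phi_m\|_{L^2}^2$ is controlled by the stated bound $C[1+\|\rho_m\|_{L^1}^2+\|\rho_m\|_{L^q}^2]$ (Remark~\ref{rk:ape}), with $\|\rho_m\|_{L^1}$ from (i) and $\|\rho_m\|_{L^q}$ by interpolation. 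The delicate point is the growth term, which carries a factor $m$: here I use~\eqref{as:G}. On $\{p_m\ge p_H\}$ one has $G\le0$, so that part is nonpositive; on $\{p_m\le p_H\}=\{\rho_m\le p_H^{1/m}\}$ I split at the fixed level $\rho_m=\tfrac12$, using $\rho_m^m\le 2^{-(m-1)}\rho_m$ where $\rho_m\le\tfrac12$ and, by Chebyshev, $\mathrm{meas}\{\rho_m>\tfrac12\}\le2M(t)$ where $\tfrac12<\rho_m\le p_H^{1/m}$. Thus $\frac{m}{m-1}\int p_mG\le C(T)$ \emph{uniformly in} $m$, and Grönwall gives $\mathcal E_m\le C(T)$, i.e.\ $\int p_m\le C(T)m$ and $\int_0^T\int|\nabla p_m|^2\le C(T)$.

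Item (iii) follows from the same scheme applied to the higher energies $\frac1m\int_{\R^n}\rho_m^{m+j}\,dx$ for $j=2$ and $j=4$. Integration by parts turns the dissipation into a positive multiple (with $m$-uniform coefficient) of $\int|\nabla\rho_m^{(2m+j)/2}|^2$, which for $j=2,4$ are exactly $\int|\nabla\rho_m^{m+1}|^2=\int|\nabla p_m^{(m+1)/m}|^2$ and $\int|\nabla\rho_m^{m+2}|^2=\int|\nabla p_m^{(m+2)/m}|^2$; the growth term is bounded $m$-uniformly by the same splitting as in (ii). The main obstacle is the chemotactic term, which after integration by parts and $-\Delta\phi_m=\rho_m$ becomes the destabilizing $\sim\int_{\R^n}\rho_m^{m+j+1}\,dx$. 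I absorb it into the dissipation by a Gagliardo–Nirenberg/Sobolev interpolation anchored on the mass bound (i): because $m+j+1$ exceeds the energy exponent $m+j$ by a fixed amount while $m\to\infty$, the gradient enters with a power strictly below $2$, so Young's inequality absorbs the term, the remainder being controlled by (i) and the lower energy. Guaranteeing that the constants in this absorption stay uniform in $m$ is the crux of the whole proposition. Time-integrating then gives the two gradient bounds, and the absorbed term yields $\int_0^T\int\rho_m^{m+5}\le C(T)$.

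Finally, for (iv) I multiply~\eqref{pmc} by $|x|^2$ and integrate. Using $\rho_m\nabla p_m=\frac{m}{m+1}\nabla\rho_m^{m+1}$, the pressure term integrates by parts to $\frac{2mn}{m+1}\int\rho_m^{m+1}$, whose time integral is bounded by interpolating the $L^{m+5}$ bound of (iii) with the mass bound (i). The chemotactic term $2\int\rho_m\,x\cdot\nabla\phi_m$ symmetrizes to $\int_{\R^n}\int_{\R^n}\rho_m(x)\rho_m(y)\,(x-y)\cdot\nabla\caln(x-y)\,dy\,dx$; since $(x-y)\cdot\nabla\caln(x-y)$ is constant for $n=2$ and of order $|x-y|^{-(n-2)}$ for $n\ge3$, this is bounded by $(\int\rho_m)^2$, respectively by Hardy–Littlewood–Sobolev, hence $m$-uniformly. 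The remaining term is $\le G_M\int|x|^2\rho_m$, so Grönwall together with the second-moment bound in~\eqref{as:ID} closes the estimate.
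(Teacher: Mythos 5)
Your proposal follows the same architecture as the paper — (i) by integrating the equation, (ii) and (iii) by testing with powers of $\rho_m$ (equivalently, multiplying the pressure equation \eqref{pressure} by $\rho_m^j$, $j=0,2,4$), (iv) by the second moment with symmetrization of the drift, and your splitting of the growth term at $\rho_m=\frac12$ is an acceptable variant of the paper's one-line bound $\min(p_m,p_H)\le p_H^{(m-1)/m}\rho_m$. But there are two genuine gaps. The first is the step you yourself call the crux: absorbing $\int\rho_m^{m+j+1}$ into the dissipation $\int|\nabla\rho_m^{\,m+j/2}|^2$ with constants uniform in $m$. You assert this via ``Gagliardo--Nirenberg anchored on the mass bound,'' but that mechanism does not work directly: the mass controls $u=\rho_m^{\widetilde m}$ only through $\int u^{1/\widetilde m}\,dx$, an $m$-dependent sub-unit exponent which is not a norm, so a direct interpolation produces constants that degenerate as $m\to\infty$ — the paper states exactly this obstruction (``forbids direct use of the Sobolev inequalities on $\rho$''). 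The missing idea is the truncation device of Lemma~\ref{lm:interpol}: split $\rho=\min(A^{1/(\widetilde m+k)},\rho)+\psi(\rho)$, apply Gagliardo--Nirenberg to $\psi(\rho)^{(\widetilde m+k+1)/2}$, use Chebyshev, ${\rm meas}\{\rho>A^{1/(\widetilde m+k)}\}\le M A^{-1/(\widetilde m+k)}$, and choose $A$ large independently of $\widetilde m$; only after this does your observation that ``the gradient enters with a power strictly below $2$'' (namely $2n/(n+2)$) let Young's inequality close the bound \eqref{est:1}. Since (ii), (iii), and through them (iv) all hinge on this inequality, flagging the uniformity without supplying the truncation leaves the central estimate unproved.

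The second gap is dimensional: in (ii) you absorb the drift by Young's inequality, which requires $\|\nabla\phi_m\|_{L^2(\R^n)}\le C$. For the Newtonian kernel in $n=2$ this is false — $|\nabla\phi_m(x)|\sim M/|x|$ at infinity, so $\nabla\phi_m\in L^q$ only for $q>2$, consistent with the strict restriction $q>n/(n-1)$ in \eqref{gradphi} — and covering $n=2$ is one of the paper's announced contributions. What you wrote reproduces the alternative route of Remark~\ref{rk:ape}, valid for general kernels satisfying the $L^2$ control hypothesis, not for the 2D Newtonian case; moreover your ``$\|\rho_m\|_{L^q}$ by interpolation'' is circular as stated, since interpolating up to $L^m$ uses the very bound $\int p_m\le C(T)m$ being proved, and even the Remark closes this loop through \eqref{est:1}. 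The paper's main proof avoids all of this by integrating the potential term by parts, $(m-1)\int\nabla p_m\cdot\nabla\phi_m=(m-1)\int p_m\rho_m$, valid in every dimension $n\ge2$, and absorbing $\int\rho_m^{m+1}$ by \eqref{est:1} with $\widetilde m=m$, $k=0$ — precisely the device you already employ in (iii) and (iv), so the repair is immediate once Lemma~\ref{lm:interpol} is in hand. Your treatment of (iv) (interpolation of $\rho_m^{m+1}$ between (i) and the $L^{m+5}$ bound, Hardy--Littlewood--Sobolev or the constant kernel in $n=2$ for the interaction term) is sound and matches the paper's.
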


As usual, several further conclusions follow from this proposition.  For instance, the second estimate in (ii) and interpolation with (i) give for $0\leq t \leq T$, 
\begin{align*}
\| \rho_m (t) \|_{L^m(\R^n)} \leq (C(T)m)^{\frac 1 m} \leq C(T), \qquad \| \rho_m (t) \|_{L^q(\R^n)} \leq C(T), \; 1\leq q \leq m.
\end{align*}
Furthermore, the first estimate in (ii), (iii) and the Sobolev inequality also give
\begin{align} \label{est2} \begin{cases}
 \|  p_m+ (p_m)^\frac{m+1}{m}  \|_{L^2\big((0,T); L^{\frac {2n}{n-2}}(\R^n)\big)} \leq C(T), \qquad n\geq 3,
 \\
  \|  p_m \|_{L^r\big((0,T); L^q(\R^n)\big)} \leq C(T), \; \frac{m+1}m \leq q < \infty,  \; \frac 2 r=1+ \frac{m+1}{mq}, \quad n=2, 
\end{cases}\end{align}
where, for $n=2$, the inequality follows from the Gagliardo-Nirenberg-Sobolev inequality and the third estimate in (ii).
\\

These estimates on $\rho_m\in L^\infty((0,T); L^1\cap L^m\big(\R^n)\big)$ also have consequences on the field $\phi_m$ when it is given by the Newtonian field. Using the Young inequalities for convolutions, or Sobolev inequalities, we obtain, for $0\leq t \leq T$, 
\begin{align} \label{gradphi}
\| \nabla \phi_m (t) \|_{L^q(\R^n)} \leq C(T), \qquad  \frac {d}{d-1}< q \leq \infty, \quad \text{for} \; m>n.
\end{align}
Also because
\begin{align*}
\partial_t \phi_m (t) = - \frac{m}{m+1}( \rho_m)^\frac{m+1}{m} - \Delta^{-1}  {\rm div} [\rho_{m} \nabla \phi_m ] +\Delta^{-1}   \rho_{m} G(p_m),
\end{align*}
we conclude bounds as
\begin{align}\label{dtphi}
\partial_t \phi_m (t) \quad \text{is bounded in}  \quad L^2\big((0,T); L^{\frac {2n}{n-2}}(\R^n)\big).
\end{align}
These further estimates are not necessarily sharp but are enough for our purposes in the next sections. Lemma~\ref{lm:interpol} furnishes other estimates by interpolation.

\begin{proof}
The first bound is immediate by integrating Eq. \eqref{pmc}. 
\\

For the bound (ii), we compute, for $t \leq T$, integrating Eq.~\eqref{pressure}
\begin{align*}
  \frac{d}{dt}   \int_{\R^n} p_m(t) dx  +\int_{\R^n} (m-1) |\nabla p_m|^2 dx
  = \int_{\R^n}\big[  (m-1) \nabla p_m. \nabla \phi_m + m p_m G(p_m) \big]dx.
\end{align*}
Because $ \min(p_m, p_H) \leq (p_H)^{\frac {m-1} m} \rho_m$, and integrating by parts the potential term, we find
\begin{align*}
\frac{d}{dt}   \int_{\R^n} p_m(t) dx + \int_{\R^n} (m-1) |\nabla p_m|^2 dx \leq (m-1)    \int_{\R^n} p_m \rho_m  dx + m  p_H^{\frac {m-1} m} M(T) G_M .
\end{align*}
This shows the estimates (ii). 
\\

The estimate (iii) is just a variant of (ii) obtained multiplying Eq.~\eqref{pressure} by the power $(\rho_m)^2 =(p_m)^{\frac{2}{m}}$ or $(\rho_m)^4 =(p_m)^{\frac{4}{m}}$, and integrating by parts. We find, for instance for the first case
\begin{align*}
\frac{m}{m+2}  \frac{d}{dt}   \int_{\R^n} p_m(t)^\frac{m+2}{m} & dx  + \int_{\R^n} (1+m) (\rho_m)^2 |\nabla p_m|^2 dx
\\
&  = \int_{\R^n}\big[ m \frac{m+1}{m+2} (\rho_m)^{m+3} + m p_m \rho_m^2 G(p_m) \big]dx.
\end{align*}
As before, we obtain the  inequality
\begin{align*}
\frac{m}{m+1} \frac{d}{dt}  \int_{\R^n} p_m(t)^\frac{m+2}{m} dx  +& \frac{ m^2}{1+m} \int_{\R^n} |\nabla (p_m)^\frac{1+m}{m}|^2 dx
\\
&  \leq m \frac{m+1}{m+2}  \int_{\R^n} (\rho_m)^{m+3}  dx + m  p_H^\frac{m+1}{m} M(T) G_M .
\end{align*}
Then, we use \eqref{est:1} with $\widetilde m=m+ 1 $ and $k=1$ to control the term $(\rho_m)^{m+3}$ by the term $ |\nabla (p_m)^\frac{1+m}{m}|^2$ and we conclude the first estimate of (iii).  The same computation gives the estimate on $|\nabla (p_m)^\frac{2+m}{m}|^2 $ and on  $(\rho_m)^{m+5}$.

The bound on $(\rho_m)^{m+2} $ is also a conclusion of \eqref{est:1} with $\widetilde m =m+2$ and $k=0$.
\\

For proving (iv), we compute
 \begin{align*}
\frac{d}{dt} \int_{\R^n} \frac{| x|^2}{2}  \rho_m dx &\leq - \int_{\R^n} \rho_m x.[m(\rho_m)^{m-1} \nabla \rho_m -\nabla \phi_m] dx + G(0) \int_{\R^n} | x|^2 \rho_m dx
\\
& \leq   \frac{nm}{m+1}  \int_{\R^n} (\rho_m)^{m+1} dx - \frac{n-2}{2}  \alpha_n  \int_{\R^{2n}}  \frac  1  {|x-y|^{n-2}} \rho_m (x)\rho_m(y) dx dy 
\\
& \qquad  + G(0) \int_{\R^n} | x|^2 \rho_m dx.
\end{align*}
It remains to use the second bound in (iii) and Proposition~\ref{thm:ape} is proved.

\end{proof}

For this proof, we use the following general result.
\begin{lemma} \label{lm:interpol}
Let  $\rho \geq 0$ and set $M:=\int_{\R^n} \rho (x) dx<\infty $.  Assume that for some ${\widetilde m}>1$, $ \nabla \rho^{\widetilde m} \in L^2(\R^n)$. Then, for all $0 \leq \frac{k+1}{\widetilde m} < 1$
\begin{align} \label{est:1}
\int_{\R^n}  \rho(x)^{{\widetilde m}+k+1}  dx \leq C(\frac {k+1}{\widetilde m},n,M) + \frac 1 2 \int_{\R^n} |\nabla \rho(x)^{\widetilde m}|^2 dx.
\end{align}
The constant $C(\frac {k+1}{\widetilde m},n,M)$ blows-up as $\frac{k+1}{\widetilde m}\to 1$, $ {\widetilde m}\to \infty$ and is unifornly bounded on each closed subinterval.
\end{lemma}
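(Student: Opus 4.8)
The plan is to obtain \eqref{est:1} from a single Gagliardo--Nirenberg interpolation followed by Young's inequality, the whole point being a sharp exponent count. Set $u:=\rho^{\widetilde m}\ge 0$ and $\theta:=\frac{k+1}{\widetilde m}\in[0,1)$. Then the two hypotheses become $\nabla u\in L^2(\R^n)$ and, from the mass,
\[
\|u\|_{L^{1/\widetilde m}}=\Big(\int_{\R^n}\rho\,dx\Big)^{\widetilde m}=M^{\widetilde m},
\]
while the quantity to be controlled is
\[
\int_{\R^n}\rho^{\widetilde m+k+1}\,dx=\int_{\R^n}u^{1+\theta}\,dx=\|u\|_{L^{1+\theta}}^{1+\theta}.
\]

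First I would interpolate the exponent $1+\theta$ between the ``mass'' exponent $1/\widetilde m<1$ and the gradient, using the Gagliardo--Nirenberg inequality for the nonnegative function $u$:
\[
\|u\|_{L^{1+\theta}}\le C_n\,\|\nabla u\|_{L^2}^{a}\,\|u\|_{L^{1/\widetilde m}}^{1-a},\qquad \frac{1}{1+\theta}=a\Big(\tfrac12-\tfrac1n\Big)+(1-a)\,\widetilde m .
\]
This is legitimate in every dimension $n\ge2$ (for $n=2$ the bracket $\tfrac12-\tfrac1n$ vanishes), the only mild point being that the lower exponent $1/\widetilde m$ is below $1$, a standard and harmless extension of Gagliardo--Nirenberg for nonnegative functions; indeed for $n\ge3$ one may avoid it altogether by combining the Sobolev embedding $\|u\|_{L^{2^*}}\le C_n\|\nabla u\|_{L^2}$ with H\"older interpolation of $L^{1+\theta}$ between $L^{1/\widetilde m}$ and $L^{2^*}$, since $1+\theta<2<2^*$. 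Raising to the power $1+\theta$ and substituting the mass bound gives
\[
\int_{\R^n}u^{1+\theta}\,dx\le C_n^{\,1+\theta}\,M^{\widetilde m(1-a)(1+\theta)}\,\|\nabla u\|_{L^2}^{\,\gamma},\qquad \gamma:=a(1+\theta).
\]

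The decisive step is the computation of $\gamma$. Solving the scaling relation for $a$ (one checks $a\in(0,1)$ because $\tfrac{1}{1+\theta}>\tfrac12$) yields
\[
\gamma=a(1+\theta)=\frac{(1+\theta)\widetilde m-1}{\widetilde m-\tfrac12+\tfrac1n},
\]
whence, after clearing the positive denominator, $\gamma<2\Longleftrightarrow \theta<1+\tfrac{2}{n\widetilde m}$; in particular the hypothesis $\frac{k+1}{\widetilde m}<1$ forces $\gamma<2$. This subquadratic power of the gradient is exactly what the statement needs, and it is the crux of the argument: the restriction $\theta<1$ is precisely the threshold below which interpolation leaves room to absorb the gradient.

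Finally I would apply Young's inequality with the conjugate pair $\big(\tfrac2\gamma,\tfrac{2}{2-\gamma}\big)$, writing $\|\nabla u\|_{L^2}^{\gamma}=\big(\|\nabla u\|_{L^2}^2\big)^{\gamma/2}$, to get
\[
C_n^{\,1+\theta}M^{\widetilde m(1-a)(1+\theta)}\|\nabla u\|_{L^2}^{\gamma}\le \frac12\int_{\R^n}|\nabla u|^2\,dx+C\Big(\tfrac{k+1}{\widetilde m},n,M\Big),
\]
which is \eqref{est:1}. The resulting constant is of the form $\kappa(\gamma)\big[C_n^{1+\theta}M^{\widetilde m(1-a)(1+\theta)}\big]^{2/(2-\gamma)}$; since $\gamma$ increases in $\widetilde m$ with $\sup_{\widetilde m}\gamma=1+\theta$, the factors $\kappa(\gamma)$ and $\tfrac{2}{2-\gamma}$ stay bounded whenever $\frac{k+1}{\widetilde m}$ ranges in a closed subinterval $[0,\theta_0]\subset[0,1)$, uniformly in $\widetilde m$, and they diverge exactly in the regime $\frac{k+1}{\widetilde m}\to1$, $\widetilde m\to\infty$, where $\gamma\to2$. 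The main obstacle is thus not hard analysis but the bookkeeping of exponents: carrying out the scaling computation for $a$, verifying the sharp equivalence $\gamma<2\Leftrightarrow\theta<1$, and tracking the dependence of the Young constant on $\theta$ and $\widetilde m$.
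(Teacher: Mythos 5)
Your exponent bookkeeping is correct: with $u=\rho^{\widetilde m}$, $\theta=\frac{k+1}{\widetilde m}$, the scaling relation indeed gives $\gamma=a(1+\theta)=\frac{(1+\theta)\widetilde m-1}{\widetilde m-\frac12+\frac1n}$, one checks $a\in(0,1)$ since $\frac{1}{1+\theta}>\frac12>\frac12-\frac1n$, the equivalence $\gamma<2\Leftrightarrow\theta<1+\frac{2}{n\widetilde m}$ holds, $\gamma$ is increasing in $\widetilde m$ with supremum $1+\theta$, and the exponent $\widetilde m(1-a)(1+\theta)=\frac{\widetilde m\,[1-(1+\theta)(\frac12-\frac1n)]}{\widetilde m-\frac12+\frac1n}$ on $M$ stays bounded, so your Young step does produce a constant that is uniform on closed subintervals $[0,\theta_0]\subset[0,1)$ and degenerates exactly in the regime $\frac{k+1}{\widetilde m}\to1$, $\widetilde m\to\infty$, as the lemma asserts. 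For $n\ge3$ your argument is fully rigorous and genuinely different from the paper's: H\"older interpolation of $L^{1+\theta}$ between $L^{1/\widetilde m}$ and $L^{2^*}$ is valid even though $1/\widetilde m<1$ (it is a pointwise splitting $u^{1+\theta}=u^{(1+\theta)(1-\sigma)}u^{(1+\theta)\sigma}$ plus ordinary H\"older with conjugate exponents above one, constant $1$), and composing with the Sobolev embedding gives the interpolation inequality with a constant depending only on $n$ — no truncation needed. This is cleaner than the paper's route, which instead cuts $\rho$ at the level $A^{1/(\widetilde m+k)}$, applies the Nash form of Gagliardo--Nirenberg to $\psi(\rho)^{(\widetilde m+k+1)/2}$, controls the level-set measure by Chebyshev with the mass $M$, and closes by taking $A$ large; the payoff of the paper's hands-on decomposition is that it is dimension-uniform and in particular covers $n=2$.

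That is where your one real gap sits: for $n=2$ the Sobolev fallback is unavailable, and you invoke a Gagliardo--Nirenberg inequality with lower exponent $q=1/\widetilde m<1$ as ``a standard and harmless extension'' with constant written as $C_n$. The inequality with $q<1$ is indeed true, but the uniformity of its constant as $q=1/\widetilde m\to0$ (equivalently $\widetilde m\to\infty$) is precisely the uniformity-in-$m$ that this lemma exists to provide, so it cannot be assumed without proof or a reference that tracks the $q$-dependence; the standard proofs of the sub-unity case proceed by truncating $u$ at a level, applying the $q\ge1$ inequality to the truncated part, and bounding the superlevel set by Chebyshev — which is essentially the paper's argument reproduced inside the black box. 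So in dimension two your proposal is circular-in-disguise unless you either carry out that truncation (at which point you have rewritten the paper's proof) or cite a statement of Gagliardo--Nirenberg for $0<q<1$ with an explicit constant uniform as $q\to0$ for $p=1+\theta$ in compact subsets of $[1,2)$. One fix in your spirit: prove the $n=2$, $q<1$ inequality once, by splitting $\int u^{1+\theta}$ at a level $\lambda$, using Nash's inequality on $(u-\lambda)_+$ and the bound $\lambda^{1/\widetilde m}\,{\rm meas}\{u>\lambda\}\le \|u\|_{L^{1/\widetilde m}}^{1/\widetilde m}$, and checking the resulting constant; this preserves your streamlined exponent count for $n\ge3$ while making $n=2$ honest.
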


The difficulty here is to obtain controls independent of $m$, which forbids  direct use of the Sobolev inequalities on $\rho$. 
\begin{proof}
For a constant $A$ to be chosen later, we decompose $ \rho$ as follows
 \begin{align*} 
 \rho= \min (A^{\frac 1 {{\widetilde m}+k}}, \rho) +  \psi(\rho), \qquad  \psi'(\rho)=0 \quad \text{for}\;  \rho <A^{\frac 1 {{\widetilde m}+k}}, \qquad 0\leq \psi'(\rho)\leq 1.
 \end{align*} 
 Then, we use the Gagliardo-Nirenberg-Sobolev inequality for $u= \psi(\rho)^\frac{{\widetilde m}+k+1}{2}$  and the Cauchy-Schwarz inequality. We find distinguishing the subsets where $\rho \leq A^{\frac 1 {{\widetilde m}+k}}$ or not, 
\begin{align*}
\int_{\R^n}  \rho^{{\widetilde m}+k+1}  dx& \leq A M+  C(n)\left( \int_{\R^n} \psi(\rho)^\frac{{\widetilde m}+k+1}{2} dx\right)^{\frac{4}{n+2}} 
 \left( \int_{\R^n} \big| \nabla \psi(\rho)^\frac{{\widetilde m}+k+1}{2} \big|^2 dx\right)^{\frac{n}{n+2}}
\\
&\leq AM+  C(n) \left( \int_{\R^n} \psi(\rho)^{{\widetilde m}+k+1} dx \;  {\rm meas} (\{ \rho >A^{\frac 1 {{\widetilde m}+k}}\} \right)^{\frac{2}{n+2}} 
\\
& \qquad \qquad  \qquad \qquad 
 \left( \int_{\R^n} \big| \frac{{\widetilde m}+k+1}{2}   \psi(\rho)^\frac{{\widetilde m}+k-1}{2} \psi'(\rho) \nabla \rho \big|^2 dx\right)^{\frac{n}{n+2}}.
 \end{align*}
Because $A^{\frac 1 {{\widetilde m}+k} } {\rm meas} (\{ \rho >A^{\frac 1 {{\widetilde m}+k}}\} \leq M$,  the last term  can be further controled as 
\begin{align*}
 C&(n)\left( A^{\frac {-1} {{\widetilde m}+k}} M \int_{\R^n} \rho^{{\widetilde m}+k+1} dx \right)^{\frac{2}{n+2}} 
 \left( \int_{\R^n} \big| \frac{{\widetilde m}+k+1}{2} A^{-\frac{{\widetilde m}-k-1}{2({\widetilde m}+k)}}  \rho^{{\widetilde m}-1} \nabla \rho \big|^2 dx\right)^{\frac{n}{n+2}}
  \\
&\leq A^{-\frac{2+n({\widetilde m}-k-1)}{({\widetilde m}+k)(n+2)}}C(n) \left( M \int_{\R^n} \rho^{{\widetilde m}+k+1} dx \right)^{\frac{2}{n+2}} 
 \left( \int_{\R^n} \big| \frac{{\widetilde m}+k+1}{2{\widetilde m}} \nabla \rho^{\widetilde m} \big|^2 dx\right)^{\frac{n}{n+2}}
\end{align*}
because $\psi(\rho) >0$ only when $\rho > A^{\frac 1 {{\widetilde m}+k}}$.
Consequently, choosing $A$ large enough independently of ${\widetilde m}$, we may write.
\begin{align*}
\int_{\R^n}  \rho^{{\widetilde m}+k+1}  dx \leq  A M + \left( \int_{\R^n} \rho^{{\widetilde m}+k+1} dx \right)^{\frac{2}{n+2}} 
 \left( \frac 1 2 \int_{\R^n} \big|  \nabla \rho^{\widetilde m} \big|^2 dx\right)^{\frac{n}{n+2}}.
\end{align*}
This directly gives the estimate \eqref{est:1}.
\end{proof}

\begin{remark} \label{rk:ape} Another way to perform the estimate (ii) in Propositon \ref{thm:ape} is as follows. The control of $\int_{\R^n}  p_m dx$ is changed as 
\begin{align*}
\frac{d}{dt}   \int_{\R^n} p_m(t) dx +& \int_{\R^n} (m-1) |\nabla p_m|^2 dx 
\\
&\leq (m-1)    \int_{\R^n} \nabla p_m .\nabla \phi_m  dx + m  p_H^{\frac {m-1} m} M(T) G_M 
\\
&\leq \frac{m-1}{2}     \int_{\R^n}[ | \nabla p_m|^2 + |\nabla \phi_m |^2] dx + m  p_H^{\frac {m-1} m} M(T) G_M .
\end{align*}
Consequently any control of  $\int_{\R^n}  |\nabla \phi_m |^2 dx$ by a fixed norm $L^q$ of $\rho_m$ will allow to close the bound thanks to~\eqref{est:1} for $m \geq q$ (not optimal in view of \eqref{est2}).
\end{remark}

\begin{remark} \label{rk:gradphalf} Because we also have $ \int_{\R^n} (p_m^0)^{\frac  {m+1}{2m}} \leq K_0 m^{\frac 1 2} $, a similar argument  as in the proof of (ii) arrives to the conclusion that we can control lower powers of $p_m$
\begin{align*}
 \int_0^T\int_{\R^n} |\nabla p_m (t)^{\frac  {m+1}{2m}}|^2 dx dt \leq C(T).
 \end{align*}
\end{remark}

\section{Fundamental relations for the weak limits}
\label{sec:limit}

Departing from the integrability properties in Proposition~\ref{thm:ape}, after extraction, there are weak limits
\begin{align}\label{wlim1} \begin{cases}
\rho_m \rightharpoonup \rho_\infty  \quad & \text{in } w-L^q((0,T)\times \R^n), \qquad 1\leq q < \infty,
\\[2pt]
\rho_m p_m \rightharpoonup p_\infty \quad & \text{in } w-L^2((0,T);  \dot H^1(\R^n) .
\end{cases}
\end{align}
For the second statement, the existence of a weak limit is a consequence of the bound on $\rho_m p_m$ stated in Proposition~\ref{thm:ape} (iii). We call it $p_\infty$ because of one of the statements below.

Passing to the weak limit in the equation under the form \eqref{pmc2}, we obtain, as in Eq.~\eqref{pmi}, the relation  between $ \rho_\infty$ and $p_\infty$
\begin{align} \label{eq:pinfty}
\partial_t \rho_{\infty} - \Delta p_\infty + {\rm div} [\rho_{\infty} \nabla \phi_\infty ]=R(t,x) \in L^\infty\big((0,T); L^1\cap L^\infty (\R^n)\big). 
\end{align}
Indeed, 
\\
$\bullet$ the drift term  $\rho_m \nabla \phi_m$ passes to the limit by weak-strong limit since $\nabla \phi_m$ converges strongly to  $\nabla \phi_\infty$ because of \eqref{gradphi}, \eqref{dtphi} and $D^2 \phi_m \in L^q$ by singular integral theory (see also~\cite{he2022incompressible} for details). 
\\
$\bullet$ the right hand side passes to the weak limit because $G$ is bounded and thus $\rho_m G(p_m)$ is bounded in the same Lebesgue spaces as $\rho_m$, following the bounds in Proposition~\ref{thm:ape}. In fact, by compensated compactness, \cite{MuratCC, TartarCC} and  following~\cite{david:hal-03636939}, and as it is also used to prove (iii) in the proposition below, it follows that
\begin{align*}
R(t,x)= \rho_\infty G(p_\infty).
\end{align*}

Also, we insist that the weak limits of $p_m$ and $\rho_m p_m$ are the same, as stated in the
\\

\begin{proposition} [Fundamental relations] \label{fundrel} 
  With the assumptions of Proposition~\ref{thm:ape}, these limits satisfy 
\\[3pt]
(i) $\qquad \; \; \,  \rho_\infty \leq 1$ for almost all $t\in (0,T)$ and $x \in \R^n$, 
\\[3pt]
(ii) $\qquad  \ p_m \rightharpoonup p_\infty \quad \text{in } w-L^p((0,T)\times \R^n)$,
\\[3pt]
(iii) $\qquad p_\infty= \rho_\infty p_\infty$ for almost all $t\in (0,T)$ and $x \in \R^n$,
\\[3pt]
(iv) $\qquad  \, \rho_\infty \nabla p_\infty = \nabla p_\infty \in L^2 \big((0,T) \times \R^n\big)^n$,
\\[3pt]
(v) $\qquad \; \partial_t \rho_\infty \, p_\infty =0$, \qquad in duality $L^2\big((0,T);H^{-1}(\R^n)\big)$, $L^2\big((0,T);H^{1}(\R^n)\big)$.
\end{proposition}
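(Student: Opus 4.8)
The plan is to prove the five assertions in the order stated, since each relies on the previous ones; the genuinely delicate point will be (iii).

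For (i), I would test the convergence $\rho_m\rightharpoonup\rho_\infty$ against an arbitrary $0\le\varphi\in C_c^\infty((0,T)\times\R^n)$ and use the last bound of Proposition~\ref{thm:ape}(iii). H\"older gives $\int\rho_m\varphi\le\|\rho_m\|_{L^{m+5}}\|\varphi\|_{L^{(m+5)'}}$, and since $\|\rho_m\|_{L^{m+5}}\le C(T)^{1/(m+5)}\to1$ while $(m+5)'\to1$ forces $\|\varphi\|_{L^{(m+5)'}}\to\|\varphi\|_{L^1}$, passing to the limit yields $\int\rho_\infty\varphi\le\int\varphi$ for every such $\varphi$, hence $\rho_\infty\le1$ a.e. For (ii), the content is that $p_m=(\rho_m)^m$ and $\rho_m p_m=(\rho_m)^{m+1}$ share the same weak limit, so I would estimate the difference $p_m(1-\rho_m)=(\rho_m)^m(1-\rho_m)$. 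On $\{\rho_m\le1\}$ one has the elementary pointwise bound $(\rho_m)^m(1-\rho_m)\le\sup_{0\le s\le1}s^m(1-s)\le\frac1{e(m+1)}$, which is $O(1/m)$ and, tested against a fixed compactly supported function, contributes $o(1)$. On $\{\rho_m>1\}$ the difference equals $-(\rho_m)^m(\rho_m-1)$ and the pointwise bound is useless since $(\rho_m)^m$ blows up; there I would instead combine the Sobolev consequence \eqref{est2} of Proposition~\ref{thm:ape}(iii), which bounds $\rho_m p_m$ in $L^2((0,T);L^{2n/(n-2)})$, with the superexponential measure decay $\mathrm{meas}\{\rho_m>1+\delta\}\le C(1+\delta)^{-(m+5)}$, splitting at the level $1+c/m$ and letting first $c\to\infty$ and then $m\to\infty$. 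This gives $p_m(1-\rho_m)\to0$ in $L^1_{loc}$; since $p_m\le\rho_m p_m+\tfrac1{e(m+1)}$ pointwise, $p_m$ inherits the same local $L^2$ bounds up to a locally integrable $O(1/m)$ remainder, and one concludes $p_m\rightharpoonup p_\infty$ in the asserted space.

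I expect (iii) to be the main obstacle, because $\rho_\infty p_\infty$ is a product of two weak limits that must be matched with the weak limit $p_\infty$ of $\rho_m p_m$, and weak limits of products are not products of weak limits; this is exactly where compensated compactness is indispensable. I would apply the div--curl lemma in the space--time variables $(t,x)\in\R^{1+n}$ to the pair $U_m=\bigl(\rho_m,\,-\tfrac{m}{m+1}\nabla(\rho_m p_m)+\rho_m\nabla\phi_m\bigr)$ and $V_m=(p_m,0,\dots,0)$. By \eqref{pmc2} the space--time divergence of $U_m$ equals $\rho_m G(p_m)$, which is bounded in $L^2_{loc}$ and hence precompact in $H^{-1}_{loc}$, while the curl of $V_m$ involves only $\nabla p_m$, bounded in $L^2$ by Proposition~\ref{thm:ape}(ii) and thus precompact in $H^{-1}_{loc}$; moreover $U_m$ and $V_m$ are bounded in $L^2_{loc}$ (using $\nabla(\rho_m p_m)\in L^2$ and $\rho_m\nabla\phi_m\in L^2$). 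Since $U_m\rightharpoonup(\rho_\infty,\dots)$ and $V_m\rightharpoonup(p_\infty,0,\dots)$ by (ii), the lemma forces $U_m\cdot V_m=\rho_m p_m\rightharpoonup\rho_\infty p_\infty$; comparing with $\rho_m p_m\rightharpoonup p_\infty$ yields $p_\infty=\rho_\infty p_\infty$. The same compensated-compactness reasoning, following~\cite{david:hal-03636939}, identifies the source term as $R=\rho_\infty G(p_\infty)$, as announced after \eqref{eq:pinfty}.

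Finally, (iv) and (v) are the two ``a priori'' relations and follow from (iii) by a Stampacchia-type splitting. For (iv) I would read off the complementarity $p_\infty(1-\rho_\infty)=0$ from (iii) and split space--time into $\{p_\infty>0\}$, where $\rho_\infty=1$ so that $(1-\rho_\infty)\nabla p_\infty=0$, and $\{p_\infty=0\}$, where $\nabla p_\infty=0$ almost everywhere because $p_\infty\in L^2((0,T);H^1)$ vanishes on this set (Stampacchia's theorem on level sets); together these give $\rho_\infty\nabla p_\infty=\nabla p_\infty$, the membership $\nabla p_\infty\in L^2$ being already known. For (v) I would argue in the same spirit but in the time variable: using $\rho_\infty p_\infty=p_\infty$ to write, in the duality between $L^2((0,T);H^{-1})$ and $L^2((0,T);H^1)$, the identity $(\partial_t\rho_\infty)\,p_\infty=(1-\rho_\infty)\,\partial_t p_\infty$, which vanishes on $\{p_\infty>0\}$ since $\rho_\infty=1$ there and on $\{p_\infty=0\}$ since $\partial_t p_\infty=0$ a.e. there. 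The delicate step is to justify this product rule at the available regularity, which I would handle by approximation, pairing the limit equation~\eqref{eq:pinfty} with a regularization of $p_\infty$ and passing to the limit.
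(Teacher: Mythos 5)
Parts (i)--(iv) of your proposal are sound, and mostly parallel the paper. For (i) the paper interpolates the $L^m$ bound $\|\rho_m(t)\|_{L^m}\le (C(T)m)^{1/m}\to 1$ rather than using the $L^{m+5}$ space--time bound, but your H\"older/test-function version is equivalent. For (ii) the paper is slicker: it avoids any measure-theoretic splitting via the two Young-type inequalities $p_m\le \tfrac{1}{m+1}+\tfrac{m}{m+1}(\rho_m)^{m+1}$ and $(\rho_m)^{m+1}\le \tfrac{m+1}{m}(\rho_m)^m+\tfrac{m+1}{2m^2}(\rho_m)^{2m}$, whose error terms vanish thanks to \eqref{est2}. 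Your splitting works too, but you have written the order of limits backwards: on the region $\{1<\rho_m\le 1+c/m\}$ your bound is of size $e^c c/m$, which diverges if $c\to\infty$ at fixed $m$; you must send $m\to\infty$ first at fixed $c$ (leaving a tail contribution of order $e^{-c/2}$ from the Chebyshev bound on $\{\rho_m>1+c/m\}$) and only then let $c\to\infty$. For (iii), your explicit space--time div--curl pair $U_m=(\rho_m,\,-\tfrac{m}{m+1}\nabla\rho_m^{m+1}+\rho_m\nabla\phi_m)$, $V_m=(p_m,0,\dots,0)$ is exactly the compensated-compactness argument the paper invokes through \cite{MuratCC,TartarCC,david:hal-03636939}; all the required bounds are indeed in Proposition~\ref{thm:ape} and \eqref{gradphi}. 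For (iv) you genuinely diverge from the paper, which uses a truncation $\chi_A$ and the chain rule applied to $\chi_A(p_\infty)^{1+\eps}$, sending $\eps\to0$ then $A\to\infty$; your argument via the level-set theorem ($\nabla p_\infty(t)=0$ a.e.\ on $\{p_\infty(t)=0\}$ for a.e.\ $t$, then Fubini) is shorter and correct, and the paper's $\eps$-power device can fairly be seen as a disguised version of the same fact.

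Part (v) is where there is a genuine gap. Your identity $(\partial_t\rho_\infty)\,p_\infty=(1-\rho_\infty)\,\partial_t p_\infty$ presupposes that $\partial_t p_\infty$ exists in some space in which the product with $1-\rho_\infty\in L^\infty$ makes sense, and nothing available gives any time regularity of $p_\infty$: the estimates yield only $\partial_t\rho_\infty\in L^2\big((0,T);H^{-1}(\R^n)\big)$ (from Eq.~\eqref{eq:pinfty}) and $p_\infty\in L^2\big((0,T);H^{1}(\R^n)\big)$. In Hele--Shaw limits the pressure can genuinely jump in time, so $\partial_t p_\infty$ need not be a function, the product $(1-\rho_\infty)\partial_t p_\infty$ is ill-defined, and the Stampacchia-type claim ``$\partial_t p_\infty=0$ a.e.\ on $\{p_\infty=0\}$'' has no meaning at this regularity. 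The regularization you gesture at does not repair this: after mollifying in time, $\langle\partial_t\rho_\infty,p_\infty^h\rangle=-\langle\rho_\infty,\partial_t p_\infty^h\rangle$ mixes $\rho_\infty(t)$ with values of $p_\infty$ at nearby times, and the pointwise cancellation $\rho_\infty=1$ on $\{p_\infty>0\}$ is only available at equal times, so no conclusion follows. The paper's proof supplies precisely the missing idea: approximate the duality pairing by one-sided difference quotients in which $p_\infty$ is never differentiated, namely $\frac{\rho_\infty(t+h)-\rho_\infty(t)}{h}\,p_\infty(t)=\frac{\rho_\infty(t+h)-1}{h}\,p_\infty(t)\le 0$ and $\frac{\rho_\infty(t)-\rho_\infty(t-h)}{h}\,p_\infty(t)=\frac{1-\rho_\infty(t-h)}{h}\,p_\infty(t)\ge 0$, where the rewriting uses only (iii) (so $\rho_\infty(t)=1$ wherever $p_\infty(t)>0$, both sides vanishing where $p_\infty(t)=0$) and the signs use only (i) and $p_\infty\ge0$. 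Letting $h\to0$ sandwiches $\partial_t\rho_\infty\,p_\infty$ between a nonpositive and a nonnegative quantity, forcing it to vanish. Replacing your product-rule manipulation by this sign/sandwich structure is necessary to make (v) rigorous.
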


The new information that we bring here is the direct proof of the identity in (iv) based on the estimates of Proposition~\ref{thm:ape} only. It is instrumental to treat the drift term. Also it establishes that both equations \eqref{pmi} and \eqref{pmi2} hold.

Notice also that the product $ \partial_t \rho_\infty \; p_\infty $ is well defined  by duality $L^2_t (H^{-1})$, $L^2_t (H^{1})$.

\begin{proof}
For (i), we recall that the inequality $ \rho_\infty \leq 1$ is a consequence of the bound in Proposition~\ref{thm:ape}~(ii)  since 
\begin{align*}
\| \rho_m (t) \|_{L^{m}(\R^n)} \leq (C(T)m)^\frac 1 m   \underset{ m \to \infty }{\longrightarrow} 1, \qquad 0 \leq t \leq T,
\end{align*}
which implies  that for all $1 \leq q\leq m$, by interpolation between $L^1$ and $L^m$,
\begin{align*}
\| \rho_m (t) \|_{L^{q}(\R^n)} \leq M(T)^{1-\theta} \big(mC(T)\big)^{\frac \theta m} , \qquad \frac 1 q= (1-\theta) + \frac \theta m.
\end{align*}
In the weak limit we find 
\begin{align*}
\| \rho_\infty (t) \|_{L^{q}(\R^n)} \leq M(T)^{\frac 1 q} \to 1 \quad \text{as } q \to \infty.
\end{align*}

Next, we prove (ii), i.e., that the weak limits of $p_m$ and $\rho_m p_m$ are the same. This follows from the two inequalities
\begin{align*}
p_m \leq  (\rho_m)^m = \frac {1}{m+1}+ \frac {m}{m+1}(\rho_m)^{m+1},
\\
(\rho_m)^{m+1} \leq \frac{m+1}{m}(\rho_m)^{m} +\frac{m+1}{2m^2} (\rho_m)^{2m},
\end{align*}
thanks to  the bounds in Proposition~\ref{thm:ape}  and estimate \eqref{est2}.
\\

For (iii), i.e., the identity $p_\infty= \rho_\infty p_\infty$ is a consequence of the above convergence and compensated compactness, following~\cite{david:hal-03636939} 
\begin{align} \label{p=rhop}
 \rho_\infty p_\infty =w-\lim \;  \rho_m p_m
\end{align}
because $\nabla p_m$ is bounded in $L^2$, see Proposition~\ref{thm:ape} (ii), and $\partial_t \rho_m $ is bounded in $L^2_t(H^{-1})$ thanks to Eq.~\eqref{pmc2} and  the bound in  Proposition~\ref{thm:ape} (ii)  (see \cite{he2022incompressible,david:hal-03636939} for details and other arguments).
\\

For proving (iv),  we use Proposition~\ref{thm:ape} (ii) and that the chain rule holds in $W_{\rm loc}^{1,1}$. For $A$ large, consider a non-decreasing 'truncation' function which satisfies $\chi_A(p) =p$ for $0\leq p \leq \frac A 2$, $\chi_A(p) =A$ for $p\geq 2A$.   We may write
\begin{align*}
p_\infty = \chi_A(p_\infty) + [p_\infty-\chi_A(p_\infty)].
\end{align*}
On the one hand, because $\chi_A(p_\infty)$ is bounded, the mapping $\chi \mapsto \chi^{1+\eps}$ is Lipschitz and we may use the chain rule  to write
\begin{align*}
\nabla \chi_A(p_\infty)^{1+\eps} = (1+\eps) \chi_A(p_\infty)^{\eps}\nabla \chi_A(p_\infty)= \rho_\infty (1+\eps) \chi_A(p_\infty)^{\eps}\nabla \chi_A(p_\infty),
\end{align*}
still using (iii) and because when $\rho_\infty \neq 1$, then $p_\infty=\chi_A(p_\infty)^{\eps} =0$. Therefore, we have obtained that
\begin{align*}
\nabla \chi_A(p_\infty)^{1+\eps} =\rho_\infty \nabla \chi_A(p_\infty)^{1+\eps}
\end{align*}
and as $\epsilon \to 0$ we find  $ \nabla \chi_A(p_\infty) = \rho_\infty \nabla \chi_A(p_\infty)$.  
\\

On the other hand, $\nabla p_\infty- \nabla \chi_A(p_\infty) \to 0 $ in $L^2\big((0,T)\times \R^n \big)$ as $A \to \infty$. 
All together, we have proved (iv).
\\

The identity (v) can be obtained in different ways, see \cite {PrX,LiuXu,Igbida}.  A  simple  argument is given in~\cite{david:hal-03636939} and uses that $\partial_t \rho_\infty \in L^2\big( (0,T); H^{-1}(\R^n)\big)$ (see Eq.~\eqref{eq:pinfty})  and $p_\infty\in L^2\big( (0,T); H^{1}(\R^n)\big)$ (see Proposition~\ref{thm:ape} (ii)).  Therefore $\partial_t \rho_\infty (t) \, p_\infty (t)$ can be approximated successively as
\begin{align*}
\frac{\rho_\infty (t+h) -\rho_\infty (t)}{h} p_\infty (t) =\frac{\rho_\infty (t+h) -1}{h} p_\infty (t) \leq 0, 
\\
\frac{\rho_\infty (t) -\rho_\infty (t-h)}{h} p_\infty (t) =\frac{1-\rho_\infty (t-h)}{h} p_\infty (t) \geq 0,
\end{align*}
where we have only used again (iii).
\end{proof}

\section{Stong convergence of $\nabla p_m$ and the complementarity condition}
\label{sec:limites}

A simple procedure to pass to the limit $m \to \infty$ and recover the complementarity relation~\eqref{cc} has been elaborated when chemotaxis is ignored. We follow the most advanced form in~\cite{david:hal-03636939}, based on the ideas  in~\cite{PrX,LiuXu,DouLiuZhou}.

The first step is to  pass to the limit in Eq.~\eqref{pmc2}  and, as already established in Section~\ref{sec:limit}, we find that
\begin{align} \label{pmclimit}
\begin{cases}
\partial_t \rho_{\infty} - \Delta p_\infty + {\rm div} [\rho_{\infty} \nabla \phi_\infty ]= \rho_{\infty} G(p_\infty), \qquad x \in \R^n, \; t \geq 0, 
\\
p_{\infty}= \rho_{\infty} p_{\infty}, \qquad \phi_{\infty} = \caln \star \rho_{\infty},  \qquad \rho_{\infty} \leq 1.
\end{cases}
\end{align}
Thanks to the relations proved in Propostion~\ref{fundrel}, one can establish the
\begin{theorem} [Strong convergence of $\nabla p_m$] \label{thm:strong} 
With the assumptions of Proposition~\ref{thm:ape}, we have after extraction of a subsequence
\begin{align*}
 \nabla (p_m)^\frac{m+1}{m} \to \nabla p_\infty \qquad &\text{in } L^2\big((0,T)\times \R^n\big). 
\end{align*}
Therefore we have
 \begin{align*}
 (p_m)^\frac{m+1}{m} \to p_\infty \qquad &\text{in } L^2\big((0,T)\; L^{\frac{2n}{n-2}}( \R^n)\big), \qquad n \geq 3,
\end{align*}
and this holds for $n=2$ in interpolated Lebesgue spaces. 
\\

Assuming additionally $ \int_{\R^n} (p_m^0)^{\frac  1 2} \leq K_0 m$, we also have 
\begin{align*}
 \nabla p_m \to \nabla p_\infty \qquad &\text{in } L^2\big((0,T)\times \R^n\big). 
\end{align*}
Therefore the complementarity relation \eqref{cc} holds true.
\end{theorem}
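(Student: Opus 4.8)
Since $\rho_m p_m=(p_m)^{\frac{m+1}{m}}\rightharpoonup p_\infty$ in $L^2\big((0,T);\dot H^1\big)$ by Proposition~\ref{thm:ape}, I already have $\nabla (p_m)^{\frac{m+1}{m}}\rightharpoonup\nabla p_\infty$ weakly in $L^2\big((0,T)\times\R^n\big)$, hence $\int|\nabla p_\infty|^2\le\liminf\int|\nabla (p_m)^{\frac{m+1}{m}}|^2$ by weak lower semicontinuity. The entire point is to prove the reverse inequality for the $\limsup$, which upgrades weak to strong convergence. Writing $q_m:=(p_m)^{\frac{m+1}{m}}=\rho_m^{m+1}$, the plan is to compare two energy identities. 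On the compressible side, I test Eq.~\eqref{pmc2} with $q_m$, which after integration by parts gives
\begin{equation*}
\tfrac{m}{m+1}\int_0^T\!\!\int_{\R^n}|\nabla q_m|^2 = -\int_0^T\langle\partial_t\rho_m,q_m\rangle + \int_0^T\!\!\int_{\R^n}\rho_m\nabla\phi_m\cdot\nabla q_m + \int_0^T\!\!\int_{\R^n}\rho_m G(p_m)\,q_m .
\end{equation*}
On the limit side, I test Eq.~\eqref{pmclimit} with $p_\infty$, using relation (v) of Proposition~\ref{fundrel} to annihilate the time term and relations (iii)--(iv) to simplify, which yields
\begin{equation*}
\int_0^T\!\!\int_{\R^n}|\nabla p_\infty|^2 = \int_0^T\!\!\int_{\R^n}\rho_\infty\nabla\phi_\infty\cdot\nabla p_\infty + \int_0^T\!\!\int_{\R^n}G(p_\infty)\,p_\infty .
\end{equation*}
The goal is to bound the $\limsup$ of the right-hand side of the first identity by the right-hand side of the second.

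\textbf{Passing to the limit term by term.} The time term integrates to $\frac{1}{m+2}\big[\int(\rho_m^0)^{m+2}-\int\rho_m^{m+2}(T)\big]$: the terminal contribution has the favorable sign $-\frac1{m+2}\int\rho_m^{m+2}(T)\le0$ and may be discarded in the $\limsup$, while the initial contribution is $o(1)$ thanks to the normalization \eqref{as:ID} (so that $\frac1m\int(\rho_m^0)^{m+2}\to0$ for well-prepared data). The growth term passes to the limit by the same compensated-compactness identification that already produced $R=\rho_\infty G(p_\infty)$ in Section~\ref{sec:limit}, together with relation (iii). The step I expect to be the main obstacle is the drift term $\int\rho_m\nabla\phi_m\cdot\nabla q_m$: it is the pairing of two sequences converging only weakly in $L^2$ --- neither $\rho_m\nabla\phi_m$ nor $\nabla q_m$ is yet known to converge strongly --- so its limit cannot be read off directly. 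Here I would invoke a compensated-compactness (div--curl) argument: $\nabla q_m$ is curl-free while $\mathrm{div}(\rho_m\nabla\phi_m)$ is controlled in $H^{-1}$ through Eq.~\eqref{pmc2}, so the product converges in the sense of distributions to $\rho_\infty\nabla\phi_\infty\cdot\nabla p_\infty$; the a priori algebraic relation (iv), $\rho_\infty\nabla p_\infty=\nabla p_\infty$, is precisely what lets me identify this with the drift term of the limiting identity. Combining the three contributions gives $\limsup\int|\nabla q_m|^2\le\int|\nabla p_\infty|^2$, hence strong convergence; the stated $L^2\big(L^{\frac{2n}{n-2}}\big)$ convergence then follows from the Sobolev embedding, and from Gagliardo--Nirenberg for $n=2$.

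\textbf{From $\nabla q_m$ to $\nabla p_m$.} The key algebraic observation is the pointwise identity $\rho_m\nabla p_m=\frac{m}{m+1}\nabla q_m$, immediate from $q_m=\rho_m^{m+1}$ and $p_m=\rho_m^m$. Consequently $\rho_m\nabla p_m\to\nabla p_\infty$ strongly in $L^2$ by the first part, and it remains only to control the defect $(1-\rho_m)\nabla p_m=\nabla p_m-\frac{m}{m+1}\nabla q_m$. Writing $\nabla p_m=m\rho_m^{m-1}\nabla\rho_m$, I would estimate
\begin{equation*}
\int_0^T\!\!\int_{\R^n}(1-\rho_m)^2|\nabla p_m|^2 = m^2\int_0^T\!\!\int_{\R^n}\big[(1-\rho_m)^2\rho_m^{m-1}\big]\,\rho_m^{m-1}|\nabla\rho_m|^2 ,
\end{equation*}
and combine $\sup_{0\le s\le1}(1-s)^2 s^{m-1}=O(1/m^2)$ on the physical range with the half-power bound $\int_0^T\int\rho_m^{m-1}|\nabla\rho_m|^2=\frac{4}{(m+1)^2}\int_0^T\int|\nabla (p_m)^{\frac{m+1}{2m}}|^2=O(1/m^2)$ from Remark~\ref{rk:gradphalf}, which is exactly what the extra hypothesis $\int(p_m^0)^{1/2}\le K_0 m$ secures. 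The overshoot region $\{\rho_m>1\}$ is treated separately, its contribution vanishing by the higher-integrability bounds of Proposition~\ref{thm:ape}(iii). This forces $(1-\rho_m)\nabla p_m\to0$ in $L^2$, whence $\nabla p_m\to\nabla p_\infty$ strongly.

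\textbf{Complementarity.} Finally, strong convergence of $\nabla p_m$ makes the quadratic terms pass to the limit: $|\nabla p_m|^2\to|\nabla p_\infty|^2$ in $L^1$ and $p_m\nabla p_m=\tfrac12\nabla (p_m)^2\rightharpoonup\tfrac12\nabla (p_\infty)^2$, so that $p_m\Delta p_m=\mathrm{div}(p_m\nabla p_m)-|\nabla p_m|^2\to p_\infty\Delta p_\infty$ in $\mathcal D'$. Dividing Eq.~\eqref{pressure} by $m$ and letting $m\to\infty$ then gives $p_\infty\,\Delta[p_\infty-\phi_\infty+G(p_\infty)]=0$, which is the complementarity relation \eqref{cc}. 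The genuinely delicate point throughout is the div--curl passage in the drift term, where the a priori identity (iv) is indispensable.
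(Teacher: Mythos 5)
Your overall architecture coincides with the paper's. The ``two energy identities plus a $\limsup$'' formulation is the same computation as the paper's device of subtracting Eq.~\eqref{pmclimit} from Eq.~\eqref{pmc} and testing with $\frac{m}{m+1}\rho_m^{m+1}-p_\infty$ (the two differ only by cross terms, which pass to the limit by weak convergence plus weak lower semicontinuity), relations (iii)--(v) of Proposition~\ref{fundrel} play exactly the roles you assign them, and your passage from $\nabla q_m$ to $\nabla p_m$ --- the identity $\rho_m\nabla p_m=\frac{m}{m+1}\nabla q_m$, Remark~\ref{rk:gradphalf} on $\{\rho_m\le 1\}$, and the overshoot set $\{\rho_m>1\}$ killed through the higher integrability of Proposition~\ref{thm:ape}(iii) --- is essentially the paper's decomposition; your uniform bound $\sup_{0\le s\le 1}(1-s)^2s^{m-1}=O(m^{-2})$ even dispenses with the paper's $\eps$-split on the subcritical region, and the complementarity step is handled as in the paper.

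The genuine gap is the drift term $\int\rho_m\nabla\phi_m\cdot\nabla q_m$, which is precisely the step the paper singles out as new. The div--curl argument you propose does not close: the only control on $\dv(\rho_m\nabla\phi_m)$ comes from Eq.~\eqref{pmc2} itself, namely $\dv(\rho_m\nabla\phi_m)=-\partial_t\rho_m+\frac{m}{m+1}\Delta q_m+\rho_m G(p_m)$, which is merely \emph{bounded}, not precompact, in $L^2\big((0,T);H^{-1}(\R^n)\big)$; and pairing it against $q_m$ regenerates $\int|\nabla q_m|^2$, the very quantity whose limit you are computing, so the argument is circular. Note moreover that $\nabla\phi_m\to\nabla\phi_\infty$ \emph{strongly} by \eqref{gradphi}--\eqref{dtphi}, so the true difficulty is the weak--weak product $\rho_m\,\nabla q_m$, whose weak limit no div--curl statement identifies here. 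The paper's resolution is the algebraic identity you write one exponent too low: $\rho_m\nabla q_m=\frac{m+1}{m+2}\nabla\rho_m^{m+2}$, where $\nabla\rho_m^{m+2}$ is bounded in $L^2$ by Proposition~\ref{thm:ape}(iii) and $\rho_m^{m+2}=(p_m)^{\frac{m+2}{m}}$ has the \emph{same} weak limit $p_\infty$ as $q_m$ (by the argument of Proposition~\ref{fundrel}(ii)); the drift term then becomes a weak--strong pairing converging to $\int\nabla p_\infty\cdot\nabla\phi_\infty$, which relation (iv) identifies with $\int\rho_\infty\nabla p_\infty\cdot\nabla\phi_\infty$ --- your invocation of (iv) at that point is correct, it is the mechanism producing the limit that is missing. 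A secondary inaccuracy: assumption \eqref{as:ID} yields only $\frac1m\int_{\R^n}(\rho^0_m)^{m+2}\,dx\le K^0$, not $o(1)$, so discarding the initial-layer term genuinely requires well-prepared data as an extra hypothesis; the paper is equally terse on this point, deferring to \cite{PrX,LiuXu,david:hal-03636939}.
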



\begin{proof} {\bf Strong convergence of $ \nabla (p_m)^\frac{m+1}{m}$.} We substract Eq.~\eqref{pmclimit} to Eq.~\eqref{pmc} and we find
\begin{align*} 
\partial_t (\rho_m - \rho_{\infty}) - \Delta( \frac{m}{m+1} \rho_m^{m+1} - p_{\infty} ) +\dv & (  \rho_m \nabla \phi_m -\rho_\infty \nabla \phi_\infty) 
\\
&=  \rho_m G(p_m) -  \rho_\infty G(p_\infty).
\end{align*}
Multiplying by $\frac{m}{m+1} \rho_m^{m+1} - p_{\infty} $ and integrating by parts gives
\begin{align*}
&  \int_{\R^n} (\frac{m}{m+1} \rho_m^{m+1} - p_{\infty} )  \partial_t (\rho_m - \rho_{\infty})]dx +  \int_{\R^n} |\nabla( \frac{m}{m+1} \rho_m^{m+1} - p_{\infty} )|^2  dx
 \\
 &-  \int_{\R^n} ( \nabla( \frac{m}{m+1} \rho_m^{m+1} - p_{\infty} ) (  \rho_m \nabla \phi_m -\rho_\infty  \nabla\phi_\infty)dx
=  \int_{\R^n}[\rho_m G(p_m) -  \rho_\infty G(p_\infty)] dx.
\end{align*}
This serves to show that the term $\int_{\R^n} |\nabla( \frac{m}{m+1} \rho_m^{m+1} - p_{\infty} )|^2  dx$ converges to $0$ as $m\to \infty$ because the three other terms do so.

As already mentioned, and following \cite{david:hal-03636939}, the last term vanishes as $m\to \infty$ by compensated compactness.

The first term (with $\partial_t$) vanishes also as in  \cite{PrX,LiuXu,david:hal-03636939}. This is a simple consequence of the identity $p_{\infty} \, \partial_t \rho_{\infty}=0$ stated in Propostion~\ref{fundrel} (v).

 The new difficulty comes from the drift term. We consider successively the four terms of this product and show we can pass to the limit. For the first product, we consider
\begin{align*}
\int_{\R^n}  \nabla \frac{m}{m+1} \rho_m^{m+1}\rho_m \nabla \phi_m  dx=\int_{\R^n}  \frac{m}{m+2}\nabla \rho_m^{m+2} \nabla \phi_m dx \to \int_{\R^n} \nabla p_{\infty} \nabla\phi_\infty dx,
\end{align*}
here is the only place where we use estimate $\nabla \rho_m^{m+2} $ in Proposition~\ref{thm:ape} (iii).

The second product we consider is 
\begin{align*}
\int_{\R^n} \nabla p_{\infty} \rho_m \nabla \phi_m dx \to \int_{\R^n} \nabla p_{\infty} \rho_\infty \nabla \phi_\infty dx
\end{align*}
where the convergence is obtained by weak-strong limits because $\nabla \phi_m$ converges strongly as used before (or again by compensated compactness).

The third product is 
\begin{align*}
\int_{\R^n}  \nabla \frac{m}{m+1} \rho_m^{m+1} \rho_\infty  \nabla\phi_\infty dx \to  \int_{\R^n} \nabla p_{\infty} \rho_\infty \nabla \phi_\infty dx
\end{align*}
as a weak limit tested agianst a given function.

The fourth product does contain terms which  pass to the limit and thus, we conclude the limit
\begin{align*}
 \int_{\R^n} ( \nabla( \frac{m}{m+1} \rho_m^{m+1} - p_{\infty} ) (  \rho_m \nabla \phi_m -\rho_\infty  \nabla\phi_\infty)dx
 \to \int_{\R^n} \nabla p_{\infty} (\rho_\infty -1) \nabla \phi_\infty dx =0
\end{align*}
thanks to Proposition \ref{fundrel}  (iv). 

This concludes the strong convergence of $\nabla (p_m)^\frac{m+1}{m}$.
\\

\noindent {\bf Strong convergence of $ (p_m)^\frac{m+1}{m}$.} This is just a consequence of the Sobolev inequality (we only deal with the dimensions larger than 3)
\begin{align*}
\|(p_m)^\frac{m+1}{m} - p_\infty \|_{L^2\big((0,T)\; L^{\frac{2n}{n-2}}( \R^n)\big)} \leq \| \nabla (p_m)^\frac{m+1}{m} - \nabla p_\infty \|_{L^2\big((0,T)\times \R^n\big)} .
\end{align*}

\noindent {\bf Strong convergence of $ \nabla p_m $.} 
We fix a parameter $0< \eps \leq \frac 12$ and decompose 
\begin{align*}
 \nabla (p_m)^\frac{m+1}{m} -\frac{m+1}{m}  \nabla p_m =2 \frac{m+1}{m} & \nabla (p_m)^{\frac 12} \ind{\rho_m \leq 1-\eps} [(p_m)^{\frac 12}- (p_m)^{\frac 12+ \frac 1 m}]
 \\
 &+  \nabla (p_m)^\frac{m+1}{m}  \ind{\rho_m > 1-\eps} \frac{1-\rho_m}{\rho_m}.
\end{align*}
From this, we infer
\begin{align*}
 |\nabla (p_m)^\frac{m+1}{m} -\frac{m+1}{m}  \nabla p_m| \leq &4 |\nabla (p_m)^{\frac 12}| (1-\eps)^{\frac m2}
 \\
 &+  | \nabla (p_m)^\frac{m+1}{m} | [ 2 \eps + \ind{\rho_m > 1+\eps} \frac{\rho_m-1}{\rho_m}].
\end{align*}
As $m \to \infty$, the  first term converges to $0$ in $L^2$ thanks to the bounds in Remark~\ref{rk:gradphalf} and in Proposition~\ref{thm:ape} (ii), (iii). To treat the last one, we analyze the quantity $Q_m$ defined as  
\begin{align*}
 Q_m:=\ind{\rho_m > 1+\eps} \frac{\rho_m-1}{\rho_m}, \quad  0 \leq Q_m\leq 1,  \quad Q_m \leq \rho_m \in L_{\rm bounded}^\infty \big((0,\infty); L^1(\R^n)\big).
\end{align*}
We also have, since $(p^m)^{\frac{m+1}{m}} $ is bounded in $L^1\big((0,T)\times \R^n\big)$, 
\begin{align*}
 0\leq Q_m \leq \frac{(p^m)^\frac{m+1}{m}} {(1+\eps)^m} \to 0  \quad \text{a.e.  as } m\to \infty.
 \end{align*}
 Therefore $Q_m \to 0$ in $L^2\big((0,T)\times \R^n\big)$.
 
 Consequently, still using Proposition~\ref{thm:ape} (iii), we have obtained that, for all $\eps$ with $0< \eps \leq \frac 12 $
 \begin{align*}
\limsup_{m\to \infty} \|  \nabla (p_m)^\frac{m+1}{m} -\frac{m+1}{m}  \nabla p_m \|_{L^2\big((0,T)\times \R^n\big)} \leq  2 \eps C(T).
 \end{align*}
 which proves the stong convergence of $\nabla p_m$.
 \\
 
 Then we can pass to the limit in \eqref{pressure} and obtain the complementarity relation.
\end{proof}
\section{More regularity}
\label{sec:regularity}

Several further regularity results can easily be obtained in the context of our assumptions. 
\\

\noindent {\bf Maximum principle for $p_m$.} 
We complement the assumptions \eqref{as:G} on  $G(\cdot)$ by 
\begin{equation} \label{as:Gstrong}
\exists P_M >0 \quad \text{such that} \quad  A+ G(P_M) \leq  0 \quad \forall A \in [1, P_M].
\end{equation}

We also assume here that $\phi_m$ satisfies the bound (obvious for the Newtonian field)
\begin{equation} \label{as:phistrong}
- \Delta \phi_m  \leq \| \rho_m (t)\|_{L^\infty(\R^n)}.
\end{equation}

\begin{lemma} \label{lm:linfty}
We assume \eqref{as:ID}, \eqref{as:G}, \eqref{as:Gstrong}, \eqref{as:phistrong} and $p_m^0 \leq P_M$, then 
$p_m(t) \leq P_M$ for all $t>0$.
\end{lemma}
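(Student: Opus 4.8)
The plan is to run a maximum principle directly on the pressure equation~\eqref{pressure}. Set $M(t):=\sup_{x\in\R^n} p_m(x,t)$; note that assumption~\eqref{as:Gstrong} only carries information when $P_M\geq 1$, which I take for granted (this is the relevant regime, $P_M$ being above the homeostatic pressure $p_H$), and that the hypothesis $p_m^0\leq P_M$ gives $M(0)\leq P_M$. The goal is to show that the level $P_M$ acts as a barrier from above for $M(t)$.

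First I would record the behaviour of~\eqref{pressure} at a point $x_*=x_*(t)$ realizing the spatial supremum. There one has $\nabla p_m(x_*,t)=0$ and $\Delta p_m(x_*,t)\leq 0$, so~\eqref{pressure} reduces to the inequality
\begin{equation*}
\partial_t p_m(x_*,t)\;\leq\; m\,p_m(x_*,t)\,\big[-\Delta \phi_m(x_*,t)+G(p_m(x_*,t))\big].
\end{equation*}
Now I invoke~\eqref{as:phistrong}, namely $-\Delta\phi_m\leq \|\rho_m(t)\|_{L^\infty(\R^n)}$, together with the elementary fact that $p\mapsto p^{1/m}$ is increasing, so that the spatial maximum of $\rho_m=(p_m)^{1/m}$ equals $M(t)^{1/m}$; this is the crucial point where the \emph{nonlocal} drift term is evaluated consistently at the maximizer. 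This yields the differential inequality $\frac{d}{dt}M(t)\leq m\,M(t)\,\big[M(t)^{1/m}+G(M(t))\big]$.

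The barrier is then closed using~\eqref{as:Gstrong}. When $M(t)=P_M$, the quantity $A:=P_M^{1/m}$ lies in $[1,P_M]$ precisely because $P_M\geq 1$ and $m\geq 1$, hence $A+G(P_M)\leq 0$ and the right-hand side above is $\leq 0$. Consequently $M$ cannot cross the level $P_M$ from below: since $M(0)\leq P_M$ and $\frac{d}{dt}M\leq 0$ whenever $M=P_M$, a standard first-crossing argument (or a perturbed barrier, comparing with $P_M+\delta$ and letting $\delta\to 0$) gives $M(t)\leq P_M$ for all $t$, which is the claim.

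The main obstacle is purely technical: the rigor of the pointwise maximum principle on the unbounded domain $\R^n$. Three points need care. First, the supremum defining $M(t)$ should be attained (or approached along a maximizing sequence), which follows from the decay of $p_m$ encoded in the integrability bounds of Proposition~\ref{thm:ape} together with the parabolic regularity of $p_m$. Second, $M(t)$ is only Lipschitz and a.e.\ differentiable, so the inequality for $\frac{d}{dt}M$ is best understood in a near-maximizer sense (a Kato-type argument), evaluating~\eqref{pressure} at points where $p_m$ is within $\eps$ of its supremum and letting $\eps\to 0$. Third, the computation presumes enough smoothness of $p_m$ to make sense of $\Delta p_m\leq 0$ at the maximizer, which is licensed by the regularity of the porous-medium pressure or, alternatively, by phrasing the whole argument in the viscosity-solution framework. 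None of these alters the algebraic heart of the proof, which is the inequality $A+G(P_M)\leq 0$ supplied by~\eqref{as:Gstrong} once the nonlocal term is read off at the maximizer.
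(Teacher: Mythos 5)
Your proof is correct and follows essentially the same route as the paper: there, too, the right-hand side of Eq.~\eqref{pressure} is bounded via \eqref{as:phistrong} by $m\,p_m\big[\|p_m(t)\|_{L^\infty(\R^n)}^{1/m}+G(p_m)\big]$, and the maximum principle is closed at the barrier level $P_M$ by applying \eqref{as:Gstrong} with $A:=P_M^{1/m}\in[1,P_M]$. The technical points you flag (attainment of the supremum, the Kato-type differentiation of $M(t)$, and the implicit hypothesis $P_M\geq 1$ needed for $P_M^{1/m}\in[1,P_M]$) are exactly what the paper compresses into the phrase ``the maximum principle applies.''
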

\begin{proof}
Departing from Eq. \eqref{pressure}, we may write
\begin{align*}
\partial_t p_{m} + \nabla p_m. [ \nabla \phi_m - \nabla p_m] -  m p_m \Delta p_m &= m p_m [- \Delta \phi_m + G(p_m)]
\\
& \leq
p_m \big[  \| p_m (t)\|^\frac 1 m _{L^\infty(\R^n)} + G(p_m) \big] .
\end{align*}
Because $A:=P_M^\frac 1 m \in  [1, P_M]$, the maximum principle applies and we obtain the conclusion. 
\end{proof}

\noindent {\bf The estimate $\nabla p_m \in L^4$.} 
We continue with an extension of  the $L^4$ estimate introduced in \cite{DnPb21,AlazardBresch}, see also \cite{DavidSantambrogio23} for a recent Lipschitz bound. To simplify we only consider the case
\begin{align*}
- \Delta \phi_m = \rho_m.
\end{align*}

\begin{theorem} [$L^4$ estimate for $\nabla p_m$]
With the assumptions \eqref{as:ID}, \eqref{as:G}, and $|\nabla  p_m^0|$ is bounded in $L^2(\R^n)$, we have for all $T>0$ and some constant $C(T)$  independent of $m$
\begin{align} \label{est:s1}
\int_{\R^n}|\nabla  p_m (T)|^2  dx+m \int_0^T \int_{\R^n} p_m &\big(\Delta p_m + \rho_m +G(p_m)\big)^2 dx dt 
\\
& + \int_0^T \int_{\R^n} p_m |D^2 p_m|^2 dx dt  \leq C(T). \notag
\end{align}
Additionally, if $p_m$ is bounded in $L^\infty$ (see Lemma \ref{lm:linfty}), then for $0\leq \al <1$, 
\begin{align} \label{est:s2}
(1-\al)^2 \int_0^T \int_{\R^n} \frac{|\nabla p_m |^4 }{p_m^\al} dx dt \leq  \| p_m \|^{1-\al}_{L^\infty((0,T)\times \R^n)}C(T).
\end{align}
\end{theorem}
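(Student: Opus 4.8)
\textbf{Plan for \eqref{est:s1}.} The plan is to run a weighted energy estimate on the pressure equation \eqref{pressure}, which with $-\Delta\phi_m=\rho_m$ reads $\partial_t p_m=|\nabla p_m|^2-\nabla p_m\cdot\nabla\phi_m+m\,p_m\,w_m$ where $w_m:=\Delta p_m+\rho_m+G(p_m)$. First I would test this equation with $-\Delta p_m$ and integrate over $\R^n$: the parabolic term produces $\frac{d}{dt}\frac12\int_{\R^n}|\nabla p_m|^2$, so after integrating in time the term $\int_{\R^n}|\nabla p_m(T)|^2$ appears, controlled at $t=0$ by the assumed $L^2$ bound on $\nabla p_m^0$. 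The principal term $-m\int_{\R^n}p_m\,w_m\,\Delta p_m$ is the heart of the matter: substituting $\Delta p_m=w_m-\rho_m-G(p_m)$ isolates the dissipation $-m\int_{\R^n}p_m w_m^2$, which is exactly the second term of \eqref{est:s1} once moved to the left-hand side.

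To make the Hessian dissipation emerge I would use the Bochner/Reilly identity $\int_{\R^n}p_m(\Delta p_m)^2=\int_{\R^n}p_m|D^2 p_m|^2+3\int_{\R^n}(D^2p_m\,\nabla p_m)\cdot\nabla p_m$ (integration by parts, legitimate with the decay coming from Proposition~\ref{thm:ape}), which trades the Laplacian part of the dissipation for the full Hessian up to a cubic gradient remainder. The cross term $m\int_{\R^n}p_m w_m(\rho_m+G(p_m))$ and the reaction contributions are kept harmless by using the equation backwards, $m\,p_m w_m=\partial_t p_m-|\nabla p_m|^2+\nabla p_m\cdot\nabla\phi_m$: the $\partial_t$ part becomes a total time derivative whose boundary values are $O(1)$ uniformly in $m$, the otherwise dangerous weights $\int_{\R^n}p_m(\rho_m+G)^2$ being controlled \emph{after integration in time} by the moment bounds $\int_0^T\!\int\rho_m^{m+2}$ and $\int_0^T\!\int\rho_m^{m+5}$ of Proposition~\ref{thm:ape}(iii); the drift term $\int_{\R^n}\Delta p_m\,\nabla p_m\cdot\nabla\phi_m$ is integrated by parts and absorbed using $\nabla\phi_m\in L^q$ from \eqref{gradphi} and $D^2\phi_m\in L^q$ from singular integral theory. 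The hard part will be the cubic gradient remainder $\int(D^2p_m\nabla p_m)\cdot\nabla p_m$ together with the analogous $\int\Delta p_m|\nabla p_m|^2$ coming from $|\nabla p_m|^2$ in the equation: they carry no favourable power of $m$, they lack the protective weight $p_m$, and a crude Cauchy--Schwarz against $\int p_m|D^2p_m|^2$ forces the borderline quantity $\int|\nabla p_m|^4/p_m$. I would control them by splitting off the weight $p_m^{1/2}$, using Young's inequality to absorb a small multiple of $\int p_m|D^2p_m|^2$, and invoking the low-power gradient bound of Remark~\ref{rk:gradphalf} with Proposition~\ref{thm:ape}(ii),(iii); a Gr\"onwall argument then closes the first two terms of \eqref{est:s1}, and the bound $\int_0^T\!\int p_m|D^2p_m|^2\le C$ follows since $\int_0^T\!\int p_m(\Delta p_m)^2\le 2\int_0^T\!\int p_m w_m^2+2\int_0^T\!\int p_m(\rho_m+G)^2$ is now bounded and the cubic remainder has been estimated.

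\textbf{Plan for \eqref{est:s2}.} This I would deduce from \eqref{est:s1} by a pure integration by parts. Writing $\nabla p_m/p_m^\al=\frac{1}{1-\al}\nabla p_m^{1-\al}$ gives $\int\frac{|\nabla p_m|^4}{p_m^\al}=-\frac{1}{1-\al}\int p_m^{1-\al}\,\dv(|\nabla p_m|^2\nabla p_m)$, and since $\dv(|\nabla p_m|^2\nabla p_m)=|\nabla p_m|^2\Delta p_m+2(D^2 p_m\nabla p_m)\cdot\nabla p_m$ the right-hand side is a sum of terms of the schematic form $\int p_m^{1-\al}|\nabla p_m|^2\,D^2p_m$. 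Splitting $p_m^{1-\al}=p_m^{1/2}\cdot p_m^{1/2-\al}$ and applying Cauchy--Schwarz in space-time bounds each by $\big(\int_0^T\!\int p_m|D^2p_m|^2\big)^{1/2}\big(\int_0^T\!\int p_m^{1-2\al}|\nabla p_m|^4\big)^{1/2}$; the first factor is $\le C(T)^{1/2}$ by \eqref{est:s1}, and the second is $\le\big(\|p_m\|_\infty^{1-\al}\int_0^T\!\int|\nabla p_m|^4/p_m^\al\big)^{1/2}$. Denoting the target quantity by $I_\al$, this reads $I_\al\le\frac{C}{1-\al}\|p_m\|_\infty^{(1-\al)/2}I_\al^{1/2}$, and absorbing $I_\al^{1/2}$ yields exactly $(1-\al)^2 I_\al\le\|p_m\|_\infty^{1-\al}C(T)$. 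The only points requiring care are the vanishing of the boundary terms in the integration by parts (again from the decay in Proposition~\ref{thm:ape}) and the degeneracy as $\al\to1$, which is precisely tracked by the explicit factor $(1-\al)^2$.
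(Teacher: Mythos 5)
Your treatment of \eqref{est:s2} is correct and essentially coincides with the paper's own argument: the same integration by parts (the paper keeps the term with $-\al\,|\nabla p_m|^4/p_m^{\al+1}$ explicit instead of writing $\nabla p_m^{1-\al}$, but the computation is identical), the same splitting $p_m^{1-\al}=p_m^{1/2}\,p_m^{1/2-\al}$, and the same absorption producing the factor $(1-\al)^2$. Likewise, your setup for \eqref{est:s1} — testing \eqref{pressure} with $-\Delta p_m$ and then substituting $m\,p_m w_m=\partial_t p_m-|\nabla p_m|^2+\nabla p_m\cdot\nabla\phi_m$ into the cross term — is algebraically the same as the paper's multiplication by $-\big(\Delta p_m+\rho_m+G(p_m)\big)$, and your handling of the drift terms matches the paper's.

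The genuine gap sits exactly where you flag ``the hard part'': the cubic terms $\int|\nabla p_m|^2\Delta p_m$ and $\int (D^2p_m\,\nabla p_m)\cdot\nabla p_m$ cannot be closed by the route you propose. Splitting off $p_m^{1/2}$ and applying Young against $\eps\int p_m|D^2p_m|^2$ leaves $C_\eps\int|\nabla p_m|^4/p_m$, which is precisely \eqref{est:s2} at the forbidden endpoint $\al=1$: it is controlled by nothing available at that stage (Remark~\ref{rk:gradphalf} and Proposition~\ref{thm:ape}~(ii),~(iii) are $L^2$ bounds on gradients of powers of $p_m$, i.e.\ $L^1$ space-time bounds on quantities like $|\nabla p_m|^2/p_m^{1-1/m}$, two integrability orders short of $|\nabla p_m|^4/p_m$), and invoking \eqref{est:s2} would be circular since \eqref{est:s2} rests on \eqref{est:s1}. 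The paper's key move is that no estimate of the cubic term is needed, because it cancels identically: the identity you yourself quote, read in the opposite direction — in the paper's form \eqref{eq:Fund1},
\begin{align*}
\int_{\R^n} | \nabla p |^2 \Delta p \, dx = \frac 23  \int_{\R^n} p |D^2 p |^2 \, dx  - \frac 23 \int_{\R^n} p |  \Delta p |^2 \, dx,
\end{align*}
converts the cubic term \emph{exactly} into weighted second-order quantities. Then $\frac23\int p_m|D^2p_m|^2$ lands on the good side of the energy identity, while, writing $w_m:=\Delta p_m+\rho_m+G(p_m)$, the term $\frac23\int p_m|\Delta p_m|^2\le \frac43\int p_m w_m^2+\frac43\int p_m(\rho_m+G(p_m))^2$ is absorbed by the dissipation $m\int p_m w_m^2$ for $m$ large, the remainder being bounded after time integration by Proposition~\ref{thm:ape}~(iii). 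In your bookkeeping this is visible as a cancellation: the Reilly remainder $3\int(D^2p_m\,\nabla p_m)\cdot\nabla p_m$ equals $-\frac32\int|\nabla p_m|^2\Delta p_m$ and offsets the cubic term generated by $|\nabla p_m|^2$ in the equation once the two relations are combined with the right coefficients — you hold the correct identity but deploy it in the direction that \emph{creates} a cubic remainder rather than the one that eliminates it. A secondary imprecision: the boundary values of your total time derivative, $\frac{m}{m+1}\int\rho_m^{m+1}(t)+\int\tilde G(p_m)(t)$ with $\tilde G'=G$, $\tilde G(0)=0$, are $O(m)$, not $O(1)$, since \eqref{as:ID} only gives $\int p_m^0(\rho_m^0)^4\le K^0 m$; what is uniformly bounded, with the sign needed, is the increment over $[0,T]$, obtained e.g.\ from $\frac{d}{dt}\int\frac{m}{m+1}\rho_m^{m+1}=-\int\rho_m|\nabla p_m|^2+\int\rho_m\nabla p_m\cdot\nabla\phi_m+\int\rho_m p_m G(p_m)$ together with Proposition~\ref{thm:ape} — a point the paper glosses over as well.
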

\noindent {\bf Proof of Estimate~\eqref{est:s1}.} 
First of all, with $ |D^2 p |^2=\displaystyle{ \sum_{i,j=1}^n (D_{ij}^2p )^2 }$, we recall the general identity
\begin{align}\label{eq:Fund1}
\int_{\R^n} | \nabla p |^2 \Delta p \, dx = \frac 23  \int_{\R^n} p |D^2 p |^2 \, dx  - \frac 23 \int_{\R^n} p |  \Delta p |^2 \, dx.
\end{align}

To begin, we multiply  Eq. \eqref{pressure}, by $-\big(\Delta p_m + \rho_m +G(p_m)\big)$ and integrate by parts. This gives
\begin{align*}
\frac 12 \frac{d}{dt} \int_{\R^n} | \nabla p_m(t) |^2 dx& +m \int_{\R^n} p_m \big(\Delta p_m + \rho_m +G(p_m)\big)^2  dx 
\\
&= \int_{\R^n} [- | \nabla p_m |^2 +\nabla p_m . \nabla \phi_m ] \big(\Delta p_m + \rho_m +G(p_m)\big).
\end{align*}
Therefore, using Eq. \eqref{eq:Fund1}, we get
\begin{align}\label{eq:grad}
\frac 12 \frac{d}{dt} \int_{\R^n} | \nabla p_m(t) |^2 dx& +m \int_{\R^n} p_m \big(\Delta p_m + \rho_m +G(p_m)\big)^2  dx +  \frac 23  \int_{\R^n} p_m |D^2 p_m |^2 \, dx  \notag
\\
&= \frac 23 \int_{\R^n} p_m |  \Delta p_m |^2 \, dx + \text{Rem} (t), 
\end{align}
where 
\begin{align*}
 \text{Rem} = - \int_{\R^n}  | \nabla p_m |^2 \big( \rho_m +  G(p_m)\big) + \int_{\R^n}  \nabla p_m . \nabla \phi_m \big(  \Delta p_m + \rho_m +G(p_m)\big).
\end{align*}

Our next step is to prove that 
\begin{align} \label{est:rem}
\int_0^T \text{Rem} (t) dt \leq C(T)+\frac 13    \int_0^T \int_{\R^n}  p_m  |D^2p_m|^2  dx .
\end{align}
That is because, in the definition of $Rem$,  the first integral is bounded thanks to the estimates in Proposition~\ref{thm:ape}. This is also true for the term 
$\nabla p_m . \nabla \phi_m \big( \rho_m  +G(p_m)\big)$. The only term which requires a treatment is 
\begin{align*}
 \int_{\R^n}  \nabla p_m . \nabla \phi_m   \Delta p_m dx&= - \int_{\R^n} \partial_i p_m [ \partial^2_{ij} \phi_m  \partial_j  p_m + \partial_{j} \phi_m \partial^2_{ij} p_m ] dx
 \\
 &= - \int_{\R^n} \partial_i p_m \partial^2_{ij} \phi_m  \partial_j  p_m + \frac 12 \int_{\R^n} \Delta \phi_m |\nabla p_m |^2] dx.
\end{align*}
After integration in time, the second term is bounded again  thanks to the estimates in Proposition~\ref{thm:ape}. We arrive at
\begin{align*}
 \int_0^T\int_{\R^n}  \nabla p_m . \nabla & \phi_m   \Delta p_m dxdt\leq C(T) +
  \int_0^T \int_{\R^n}  p_m [\partial^3_{iij} \phi_m  \partial_j  p_m + \partial^2_{ij} \phi_m \partial^2_{ij} p_m] dxdt
\\
&\leq  C(T) +   \int_0^T\int_{\R^n}  p_m \big[\partial_j\rho_m \partial_j  p_m +\frac 13  |D^2p_m|^2  +\frac 2 3  |D^2 \phi_m|^2 \big] dxdt.
\end{align*}
The first term is also $\frac{m}{m+1}\int_0^T \int_{\R^n}  \partial_j (p_m)^{\frac{m+1}{m}}  \partial_j  p_mdxdt$ and thus is bounded. The last term is also bounded because $ |D^2 \phi_m|^2$ has the same regularity in $L^p$, $1<p<\infty$ as $\rho_m$ according to  singular integral theory~\cite{Stein_book}. Therefore we have proved estimate~\eqref{est:rem}.
\\

We can now conclude the estimate~\eqref{est:s1}. After time integration of Eq.~\eqref{eq:grad} and using~\eqref{est:rem}, we find
\begin{align*}
\int_{\R^n}  | \nabla p_m(t) |^2 dx &+ m \int_0^T \int_{\R^n} p_m \big(\Delta p_m + \rho_m +G(p_m)\big)^2 dx dt 
 \\
 &+  \frac 13  \int_0^T\int_{\R^n} p_m |D^2 p_m |^2 \, dx \leq C(T)+\int_{\R^n}  | \nabla p_m^0 |^2 dx, 
\end{align*}
and the estimate~\eqref{est:s1} is proved.
\\

\noindent {\bf Proof of Estimate~\eqref{est:s2}.}
Integrating by parts, we may write
\begin{align*}
  \int_{\R^n} \frac{|\nabla p_m |^4 }{p_m^\al} dx  = -  \int_{\R^n} p_m \big[- \al  \frac{ |\nabla p_m |^4}{p_m^{\al+1}} + \frac{\Delta p_m}{p_m^\al} |\nabla p_m |^2+ 2 \frac{\partial^2_{ij} p_m \partial_{i} p_m \partial_{j} p_m  }{p_m^\al} \big]dx .
\end{align*}
Therefore, we also have
\begin{align*}
(1-\al) \int_{\R^n} \frac{|\nabla p_m |^4 }{p_m^\al} dx &=-  \int_{\R^n} p_m \big[ \frac{\Delta p_m}{p_m^\al} |\nabla p_m |^2+ 2 \frac{\partial^2_{ij} p_m \partial_{i} p_m \partial_{j} p_m  }{p_m^\al} \big]dx
\\
&\leq \frac{1-\al}{4} \int_{\R^n} \Big[\frac{|\nabla p_m |^4 }{p_m^\al}  +\frac{4} {1-\al}  p_m^{2-\al} |\Delta p_m|^2
\\
 &\qquad\qquad + \frac{1-\al}{4} \int_{\R^n} \frac{|\nabla p_m |^4 }{p_m^\al}+ \frac{16} {1-\al} \int_{\R^n} p_m^{2-\al} |D^2 p_m|^2\Big] dx.
\end{align*}
And finally, this is also written 
\begin{align*}
\frac{1-\al}{2} \int_{\R^n} \frac{|\nabla p_m |^4 }{p_m^\al} dx \leq \| p_m \|^{1-\al}_{L^\infty( \R^n)} \frac{C} {1-\al} \int_{\R^n} \big[ p_m |\Delta p_m|^2+p_m |D^2 p_m|\big] dx.
\end{align*}
We obtain \eqref{est:s2} because of the estimates \eqref{est:s1} which also furnish a uniform bound on $\int_0^T\int_{\R^n} p_m |\Delta p_m|^2 dx$.


\begin{thebibliography}{10}

\bibitem{AlazardBresch}
Thomas Alazard and Didier Bresch.
\newblock Functional inequalities and strong lyapunov functionals for free
  surface flows in fluid dynamics.
\newblock {\em Preprint arXiv:12004.0344}, 2020.

\bibitem{AKY_14}
Damon Alexander, Inwon Kim, and Yao Yao.
\newblock Quasi-static evolution and congested crowd transport.
\newblock {\em Nonlinearity}, 27(4):823--858, 2014.

\bibitem{MR2471306}
Helen Byrne and Dirk Drasdo.
\newblock Individual-based and continuum models of growing cell populations: a
  comparison.
\newblock {\em J. Math. Biol.}, 58(4-5):657--687, 2009.

\bibitem{BenAmar_C_F}
P.~Ciarletta, L.~Foret, and M.~Ben~Amar.
\newblock The radial growth phase of malignant melanoma~: muti-phase modelling,
  numerical simulation and linear stability.
\newblock {\em J. R. Soc. Interface}, 8(56):345--368, 2011.

\bibitem{CollinsJacobsKim}
Carson Collins, Matt Jacobs, and Inwon Kim.
\newblock Free boundary regularity for tumor growth with nutrients and
  diffusion, 2023.
\newblock {\em ArXiv preprint 2309.05971}.

\bibitem{CKY_2018}
Katy Craig, Inwon Kim, and Yao Yao.
\newblock Congested aggregation via {N}ewtonian interaction.
\newblock {\em Arch. Ration. Mech. Anal.}, 227(1):1--67, 2018.

\bibitem{dalibard:hal-03300624}
Anne-Laure Dalibard, Gabriela Lopez-Ruiz, and Charlotte Perrin.
\newblock {Traveling waves for the porous medium equation in the incompressible
  limit: asymptotic behavior and nonlinear stability}.
\newblock {\em {Indiana University Mathematics Journal}}, 2023.

\bibitem{david:hal-03636939}
Noemi David.
\newblock Phenotypic heterogeneity in a model of tumour growth: existence of
  solutions and incompressible limit.
\newblock {\em Comm. Partial Differential Equations}, 48(4):678--710, 2023.

\bibitem{DDMS23}
Noemi David, Tomasz D{\k{e}}biec, Mainak Mandal, and Markus Schmidtchen.
\newblock A degenerate cross-diffusion system as the inviscid limit of a
  nonlocal tissue growth model, 2023.
\newblock {\em ArXiv preprint 2303.10620}.

\bibitem{DDP22}
Noemi David, Tomasz D{\k{e}}biec, and Beno{\^\i}t Perthame.
\newblock Convergence rate for the incompressible limit of nonlinear
  diffusion--advection equations.
\newblock {\em Annales de l'Institut Henri Poincar{\'e} C}, 2022.

\bibitem{DnPb21}
Noemi David and Beno\^{\i}t Perthame.
\newblock Free boundary limit of a tumor growth model with nutrient.
\newblock {\em J. Math. Pures Appl. (9)}, 155:62--82, 2021.

\bibitem{DavidSantambrogio23}
Noemi David and Filippo Santambrogio.
\newblock New lipschitz estimates and long-time asymptotic behavior for porous
  medium and fast diffusion equations, 2023.
\newblock {\em ArXiv preprint 2308.01041}.

\bibitem{DavidS_21}
Noemi David and Markus Schmidtchen.
\newblock On the incompressible limit for a tumour growth model incorporating
  convective effects, 2021.
\newblock {\em ArXiv preprint 2103.02564}.

\bibitem{DPS21}
Tomasz D{\k{e}}biec, Beno{\^\i}t Perthame, Markus Schmidtchen, and Nicolas
  Vauchelet.
\newblock Incompressible limit for a two-species model with coupling through
  {B}rinkman's law in any dimension.
\newblock {\em J. Math. Pures Appl. (9)}, 145:204--239, 2021.

\bibitem{DHV20}
Pierre Degond, Sophie Hecht, and Nicolas Vauchelet.
\newblock Incompressible limit of a continuum model of tissue growth for two
  cell populations.
\newblock {\em Netw. Heterog. Media}, 15(1):57--85, 2020.

\bibitem{DouLiuZhou}
Xu'an Dou, Jian-Guo Liu, and Zhennan Zhou.
\newblock A tumor growth model with autophagy: the reaction-(cross-)diffusion
  system and its free boundary limit.
\newblock {\em Discrete Contin. Dyn. Syst. Ser. B}, 28(3):1964--1992, 2023.

\bibitem{DouZhouNecrotic2023}
Xu'an Dou, Chengfeng Shen, and Zhennan Zhou.
\newblock Tumor growth with a necrotic core as an obstacle problem in pressure,
  2023.
\newblock {\em ArXiv preprint 2309.00065}.

\bibitem{MR4327909}
Chiara Giverso, Tommaso Lorenzi, and Luigi Preziosi.
\newblock Effective interface conditions for continuum mechanical models
  describing the invasion of multiple cell populations through thin membranes.
\newblock {\em Appl. Math. Lett.}, 125:Paper No. 107708, 8, 2022.

\bibitem{GKM}
Nestor Guillen, Inwon Kim, and Antoine Mellet.
\newblock A {H}ele-{S}haw limit without monotonicity.
\newblock {\em Arch. Ration. Mech. Anal.}, 243(2):829--868, 2022.

\bibitem{he2022incompressible}
Qingyou He, Hai-Liang Li, and Beno{\^ \i}t Perthame.
\newblock Incompressible limits of {P}atlak-{K}eller-{S}egel model and its
  stationary state, 2022.
\newblock {\em ArXiv preprint 2203.13709 and Acta Appl. Math., in press}.

\bibitem{HechtVauchelet2017}
Sophie Hecht and Nicolas Vauchelet.
\newblock Incompressible limit of a mechanical model for tissue growth with
  non-overlapping constraint.
\newblock {\em Communications in Mathematical Sciences}, 15(7):1913--1932,
  2017.

\bibitem{Igbida}
Noureddine Igbida.
\newblock {$L^1$}-theory for {H}ele-{S}haw flow with linear drift.
\newblock {\em Math. Models Methods Appl. Sci.}, 33(7):1545--1576, 2023.

\bibitem{jacobs2023lagrangian}
Matt Jacobs.
\newblock Lagrangian solutions to the porous media equation and reaction
  diffusion systems, 2023.
\newblock {\em ArXiv preprint 2208.01792}.

\bibitem{Jacobs_Kim_Tong23}
Matt Jacobs, Inwon Kim, and Jiajun Tong.
\newblock Tumor growth with nutrients: regularity and stability.
\newblock {\em Comm. Amer. Math. Soc.}, 3:166--208, 2023.

\bibitem{KimLelmi23}
Inwon Kim and Jona Lelmi.
\newblock Tumor {G}rowth with {N}utrients: {S}tability of the {T}umor
  {P}atches.
\newblock {\em SIAM J. Math. Anal.}, 55(5):5862--5892, 2023.

\bibitem{KimMellet23}
Inwon Kim and Antoine Mellet.
\newblock Incompressible {L}imit of a {P}orous {M}edia {E}quation with
  {B}istable and {M}onostable {R}eaction {T}erm.
\newblock {\em SIAM J. Math. Anal.}, 55(5):5318--5344, 2023.

\bibitem{KMW_23}
Inwon Kim, Antoine Mellet, and Yijing Wu.
\newblock A density-constrained model for chemotaxis.
\newblock {\em Nonlinearity}, 36(2):1082--1119, 2023.

\bibitem{KimPozar_18}
Inwon Kim and Norbert Po\v{z}\'{a}r.
\newblock Porous medium equation to {H}ele-{S}haw flow with general initial
  density.
\newblock {\em Trans. Amer. Math. Soc.}, 370(2):873--909, 2018.

\bibitem{kim2019singular}
Inwon Kim, Norbert Po\v{z}\'{a}r, and Brent Woodhouse.
\newblock Singular limit of the porous medium equation with a drift.
\newblock {\em Adv. Math.}, 349:682--732, 2019.

\bibitem{KimTuranova_18}
Inwon Kim and Olga Turanova.
\newblock Uniform convergence for the incompressible limit of a tumor growth
  model.
\newblock {\em Ann. Inst. H. Poincar\'{e} C Anal. Non Lin\'{e}aire},
  35(5):1321--1354, 2018.

\bibitem{KimZ_21}
Inwon Kim and Yuming~Paul Zhang.
\newblock Porous medium equation with a drift: free boundary regularity.
\newblock {\em Arch. Ration. Mech. Anal.}, 242(2):1177--1228, 2021.

\bibitem{KimPS}
Inwon~C. Kim, Beno\^{\i}t Perthame, and Panagiotis~E. Souganidis.
\newblock Free boundary problems for tumor growth: a viscosity solutions
  approach.
\newblock {\em Nonlinear Anal.}, 138:207--228, 2016.

\bibitem{Laborde}
Maxime Laborde.
\newblock On cross-diffusion systems for two populations subject to a common
  congestion effect.
\newblock {\em Appl. Math. Optim.}, 81(3):989--1020, 2020.

\bibitem{LiuXu}
Jian-Guo Liu and Xiangsheng Xu.
\newblock Existence and incompressible limit of a tissue growth model with
  autophagy.
\newblock {\em SIAM J. Math. Anal.}, 53(5):5215--5242, 2021.

\bibitem{MRS10}
Bertrand Maury, Aude Roudneff-Chupin, and Filippo Santambrogio.
\newblock A macroscopic crowd motion model of gradient flow type.
\newblock {\em Math. Models Methods Appl. Sci.}, 20(10):1787--1821, 2010.

\bibitem{MRS14}
Bertrand Maury, Aude Roudneff-Chupin, and Filippo Santambrogio.
\newblock Congestion-driven dendritic growth.
\newblock {\em Discrete Contin. Dyn. Syst.}, 34(4):1575--1604, 2014.

\bibitem{MRSV11}
Bertrand Maury, Aude Roudneff-Chupin, Filippo Santambrogio, and Juliette Venel.
\newblock Handling congestion in crowd motion modeling.
\newblock {\em Netw. Heterog. Media}, 6(3):485--519, 2011.

\bibitem{MPQ_17}
Antoine Mellet, Beno\^{\i}t Perthame, and Fernando Quir\'{o}s.
\newblock A {H}ele-{S}haw problem for tumor growth.
\newblock {\em J. Funct. Anal.}, 273(10):3061--3093, 2017.

\bibitem{MuratCC}
Fran\c{c}ois Murat.
\newblock Compacit\'{e} par compensation.
\newblock {\em Ann. Scuola Norm. Sup. Pisa Cl. Sci. (4)}, 5(3):489--507, 1978.

\bibitem{PQTV_14}
B.~Perthame, Fernando Quir{\'o}s, Min Tang, and Nicolas Vauchelet.
\newblock Derivation of a {H}ele-{S}haw type system from a cell model with
  active motion.
\newblock {\em Interfaces Free Bound.}, 16(4):489--508, 2014.

\bibitem{PQV_14}
Beno\^{\i}t Perthame, Fernando Quir\'{o}s, and Juan~Luis V\'{a}zquez.
\newblock The {H}ele-{S}haw asymptotics for mechanical models of tumor growth.
\newblock {\em Arch. Ration. Mech. Anal.}, 212(1):93--127, 2014.

\bibitem{PTV14}
Beno\^{i}t Perthame, Min Tang, and Nicolas Vauchelet.
\newblock Traveling wave solution of the {H}ele-{S}haw model of tumor growth
  with nutrient.
\newblock {\em Math. Models Methods Appl. Sci.}, 24(13):2601--2626, 2014.

\bibitem{PrX}
Brock~C. Price and Xiangsheng Xu.
\newblock Global existence theorem for a model governing the motion of two cell
  populations.
\newblock {\em Kinet. Relat. Models}, 13(6):1175--1191, 2020.

\bibitem{MR4233467}
Giuseppe Scium\`e.
\newblock Mechanistic modeling of vascular tumor growth: an extension of
  {B}iot's theory to hierarchical bi-compartment porous medium systems.
\newblock {\em Acta Mech.}, 232(4):1445--1478, 2021.

\bibitem{Stein_book}
Elias~M. Stein.
\newblock {\em Singular integrals and differentiability properties of
  functions}.
\newblock Princeton Mathematical Series, No. 30. Princeton University Press,
  Princeton, N.J., 1970.

\bibitem{TartarCC}
L.~Tartar.
\newblock Compensated compactness and applications to partial differential
  equations.
\newblock In {\em Nonlinear analysis and mechanics: {H}eriot-{W}att
  {S}ymposium, {V}ol. {IV}}, volume~39 of {\em Res. Notes in Math.}, pages
  136--212. Pitman, Boston, Mass.-London, 1979.

\bibitem{Vauchelet_Z}
Nicolas Vauchelet and Ewelina Zatorska.
\newblock Incompressible limit of the {N}avier-{S}tokes model with a growth
  term.
\newblock {\em Nonlinear Anal.}, 163:34--59, 2017.

\end{thebibliography}
%
\end{document}